\theoremstyle{plain}
\newtheorem{theorem}{Theorem}
\newtheorem{definition}{Definition}
\newtheorem{proposition}[theorem]{Proposition}
\newtheorem{lemma}[theorem]{Lemma}
\newtheorem{corollary}[theorem]{Corollary}
\theoremstyle{example}
\newtheorem{example}[theorem]{Example}
\theoremstyle{remark}
\newtheorem{remark}[theorem]{Remark}
\newtheorem{Algorithm}[theorem]{Algorithm}
\author{Carmelo Cisto}
\address[Cisto]{Universit\`{a} di Messina, Dipartimento di Scienze  Matematiche e Informatiche, Scienze Fisiche e Scienze della Terra, Viale Ferdinando Stagno D'Alcontres 31, 98166 Messina, Italy}
\email{carmelo.cisto@unime.it}
\title[Cofinite submonoids and ideals of an affine semigroup]{On some properties for cofiniteness of submonoids and ideals of an affine semigroup}
      \keywords{Affine semigroup, Ideal of a semigroup, $\mathcal{C}$-semigroup, Generalized numerical semigroup}
		\subjclass[2020]{20M14, 20M12}
\begin{document}

		\begin{abstract}
  	Let $S$ and $\mathcal{C}$ be affine semigroups in $\mathbb{N}^d$ such that $S\subseteq \mathcal{C}$. We provide a characterization for the set $\mathcal{C}\setminus S$ to be finite, together with a procedure and computational tools to check whether such a set is finite and, if so, compute its elements. As a consequence of this result, we provide a characterization for an ideal $I$ of an affine semigroup $S$ so that $S\setminus I$ is a finite set. If so, we provide some procedures to compute the set $S\setminus I$.

		\end{abstract}
\maketitle

\section*{Introduction}		

Let $\mathbb{N}$ be the set of non-negative integers. A monoid $S$ is called an \emph{affine semigroup} if it is a finitely generated submonoid of the additive monoid $\mathbb{N}^d$, for some positive integer $d$. Equivalently, up to isomorphism, $S$ is a monoid that is finitely generated, cancellative, torsion free and reduced. We recall that a monoid $S\subseteq \mathbb{Z}^d$ is finitely generated if there exists a finite set $A\subseteq \mathbb{Z}^d$ contained in $S$ such that $S=\langle A \rangle$, where:

$$\langle A \rangle= \left \lbrace \sum_{i=1}^n\lambda_i \mathbf{a}_i \mid n\in \mathbb{N}, \mathbf{a}_i\in A, \lambda_i\in \mathbb{N}\text{ for all }i\in \{1,\ldots,n\}\right \rbrace .$$ 

\noindent In particular, $S=\langle A \rangle$ is called the monoid generated by $A$ and $A$ is a set of generators of $S$. It is known that for every submonoid $S$ of $\mathbb{N}^d$, there exists a unique minimal set that generates $S$, that is $(S\setminus \{\mathbf{0}\})\setminus (S\setminus \{\mathbf{0}\}+ S\setminus \{\mathbf{0}\})$. In particular, if $S$ is an affine semigroup such a set is finite. 

Let $\mathbb{Q}_+$ be the the set of non-negative rational numbers. If $S=\langle A \rangle\subseteq \mathbb{N}^d$ is an affine semigroup, we consider the sets $\operatorname{cone}(S)=\{ \sum_{i=1}^n q_i \mathbf{a}_i \mid n\in \mathbb{N}, \mathbf{a}_i\in A, q_i\in \mathbb{Q}_+\text{ for all }i\in \{1,\ldots,n\}\}$, that is the \emph{cone} spanned by $S$, and $\operatorname{Group}(S)=\{\mathbf{a}-\mathbf{b} \in \mathbb{Z}^d\mid \mathbf{a},\mathbf{b}\in S\}$, that is called the \emph{group} generated by $S$. The sets $\mathcal{N}(S)=\operatorname{cone}(S)\cap \operatorname{Group}(S)$, called the \emph{normalization} of $S$, and $\mathcal{C}_S=\operatorname{cone}(S)\cap \mathbb{N}^d$ are affine semigroups (by Gordan's Lemma, see for instance \cite[Lemma 2.9]{bruns2009polytopes}). Since $S$ is finitely generated, if $\{\mathbf{g}_1,\ldots,\mathbf{g}_n\}\subseteq \mathbb{N}^d$ is its minimal set of generators, by relabeling if necessary, without loss of generality, we may assume that there exists $r\leq n$ such that $\operatorname{cone}(S)=\operatorname{cone}(\{\mathbf{g}_1,\ldots,\mathbf{g}_r\})$. For minimality, we may also assume that $\operatorname{cone}(\{\mathbf{g}_1,\ldots,\mathbf{g}_r\}\setminus \{\mathbf{g}_i\})\neq \operatorname{cone}(S)$ for each $i\in \{1,\ldots,r\}$. In such a case $\mathbf{g}_i\notin \operatorname{cone}(\{\mathbf{g}_1,\ldots,\mathbf{g}_r\}\setminus \{\mathbf{g}_i\})$ for each  $i\in \{1,\ldots,r\}$ (otherwise $\operatorname{cone}(\{\mathbf{g}_1,\ldots,\mathbf{g}_r\}\setminus \{\mathbf{g}_i\})=\operatorname{cone}(\{\mathbf{g}_1,\ldots,\mathbf{g}_r\}=\operatorname{cone}(S)$). We say that $\{\mathbf{g}_1,\ldots,\mathbf{g}_r\}$ is a set of \emph{extreme rays} of $S$.

 One of the motivations for studying affine semigroups is to extend the notion of numerical semigroup and some its properties. A numerical semigroup is a submnoid $S$ of $\mathbb{N}$ such that $\mathbb{N}\setminus S$ is finite. It is known that every numerical semigroup is a finitely generated monoid (so it is an affine semigroup) and every submonoid of $\mathbb{N}$ is isomorphic to a numerical semigroup. Moreover, it is known that if $S=\langle g_1,\ldots, g_n\rangle \subseteq \mathbb{N}$, then $S$ is a numerical semigroup if and only if $\gcd(g_1,\ldots,g_n)=1$. The monographs \cite{Num-sem-book, rosales2009numerical} are very good references for this and other interesting properties of numerical semigroups. 
 
 A possible way to generalize the notion of numerical semigroup to submonoids in $\mathbb{N}^d$ is to focus on the property of \emph{cofiniteness}. In general, we say that a set $A$ is \emph{cofinite} in $B$ if $A\subseteq B$ and $B\setminus A$ is a finite set. In particular, a numerical semigroup is a cofinite monoid in $\mathbb{N}$. A straightforward step in this direction is to consider submonoids $S$ of $\mathbb{N}^d$ such that $\mathbb{N}^d\setminus S$ is finite. This kind of monoids have been introduced in \cite{failla2016algorithms}, where they are called \emph{generalized numerical semigroups}. The study of the properties of these monoids is still an active area of research (for some recent works, see for instance \cite{W2, Li}). A more general situation is considered in \cite{garcia2018extension}, where the authors introduce the class of monoids $S$ such that $\mathcal{C}_S\setminus S$ is finite. A monoid $S$ of such a family is called a $\mathcal{C}_S$-\emph{semigroup}. In particular, a generalized numerical semigroup is a $\mathcal{C}_S$-semigroup such that $\mathcal{C}_S=\mathbb{N}^d$. Some recent results on $\mathcal{C}$-semigroups are contained, for instance, in \cite{bhardway2023affine, garcia2023some}. It is known that if $S$ is a $\mathcal{C}_S$-semigroup, then $S$ is finitely generated (so, the same occurs when $S$ is a generalized numerical semigroup), that is, the mentioned families of submonoids of $\mathbb{N}^d$ are classes of affine semigroups. 

 A natural problem is to consider a set $A\subseteq \mathbb{N}^d$ and characterize when the set $S=\langle A \rangle$ is a $\mathcal{C}_S$-semigroup. A result of this type has been provided firstly in \cite[Theorem 2.8]{Analele} for generalized numerical semigroups and later in \cite[Theorem 9]{diaz2022characterizing} for the general case of $\mathcal{C}_S$-semigroups. Having in mind these results, in this paper we consider a further generalization for the property of cofiniteness of an affine semigroup $S\subseteq \mathbb{N}^d$, considering the case $S$ is cofinite in another affine semigroup $\mathcal{C}\subseteq \mathbb{N}^d$. In particular, in the main result of this work, we provide a characterization for an affine semigroup $S\subseteq \mathbb{N}^d$ to be cofinite in an affine semigroup $\mathcal{C}\subseteq \mathbb{N}^d$, in terms of the generators of $\mathcal{C}$. We call this kind of monoids $\mathcal{C}$-cofinite. We provide such a result in Section~\ref{sec1}, together with a procedure and computational tools to check whether an affine semigroup is $\mathcal{C}$-cofinite for a fixed affine monoid $\mathcal{C}$ and, if so, to compute the set $\mathcal{C}\setminus S$. 

 As a consequence of the previous result, we obtain a characterization for an ideal of an affine semigroup to be cofinite in it. Recall that, if $S$ is an affine semigroup, an ideal $I$ of $S$ is a subset $I\subseteq S$ such that $I+S\subseteq I$ (in general, for $X,Y\subseteq \mathbb{N}^d$, set $X+Y=\{\mathbf{x}+\mathbf{y}\mid \mathbf{x}\in X, \mathbf{y}\in Y\}$). In Section~\ref{sec2} we give such a characterization so that $S\setminus I$ is finite. Furthermore, for a given affine semigroup $S$ and an ideal $I$ of it, we provide two different approaches in order to check whether $S\setminus I$ is finite and, if so, to compute its elements: the first one uses similar procedures to those suggested for $\mathcal{C}$-cofinite submonoids, the second one uses tools from commutative algebra. Finally, we conclude with some remarks about a relation of this subject with the \emph{Ap\'ery set} of a subset of an affine semigroup.

\section{Cofinite submonoids of an affine semigroup}\label{sec1}

The first aim of this section is to provide a characterization, for a submonoid $S$ of an affine semigroup $\mathcal{C}$, to have finite complement in it. After giving a definition for this kind of monoids and showing that they are also affine semigroups, we introduce some useful tools to prove the mentioned characterization.

\begin{definition}\rm
Let $\mathcal{C}\subseteq \mathbb{N}^d$ be an affine semigroup. A submonoid $S$ of $\mathcal{C}$ is called $\mathcal{C}$-cofinite if the set $\mathcal{C}\setminus S$ is finite.

\end{definition}


\noindent For the next result, we recall that a \emph{term order} $\preceq$ (or \emph{monomial order}) on $\mathbb{N}^d$ is a total order such that $\mathbf{0}\preceq \mathbf{v}$ for all $\mathbf{v}\in \mathbb{N}^d$ and if $\mathbf{v}\preceq \mathbf{w}$, then the inequality $\mathbf{v}+\mathbf{u}\preceq \mathbf{w}+\mathbf{u}$ holds for all $\mathbf{u}\in \mathbb{N}^d$ (see Chapter 2, \S 2 of \cite{CLO} also for some concrete examples of term orders). We use the notation $\mathbf{v}\prec \mathbf{w}$ for $\mathbf{v}\preceq \mathbf{w}$ and $\mathbf{v}\neq \mathbf{w}$.

\begin{proposition}
Let $\mathcal{C}\subseteq \mathbb{N}^d$ be an affine semigroup and $S\subseteq \mathcal{C}$ be a $\mathcal{C}$-cofinite submnoid. Then $S$ is finitely generated.
\end{proposition}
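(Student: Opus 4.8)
The plan is to prove that the unique minimal generating set
$$M=(S\setminus\{\mathbf{0}\})\setminus\bigl((S\setminus\{\mathbf{0}\})+(S\setminus\{\mathbf{0}\})\bigr)$$
of $S$ is finite; since $M$ generates $S$, this is exactly finite generation. Recall that $\mathbf{g}\in S\setminus\{\mathbf{0}\}$ lies in $M$ precisely when it is \emph{irreducible}, that is, it cannot be written as $\mathbf{a}+\mathbf{b}$ with $\mathbf{a},\mathbf{b}\in S\setminus\{\mathbf{0}\}$.

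The central tool is a Dickson-type well-partial-order carried by $\mathcal{C}$ itself. Writing $\mathbf{a}\leq_{\mathcal{C}}\mathbf{b}$ whenever $\mathbf{b}-\mathbf{a}\in\mathcal{C}$, I first establish that $\leq_{\mathcal{C}}$ is a \emph{well-partial-order}: every infinite sequence in $\mathcal{C}$ admits an infinite $\leq_{\mathcal{C}}$-nondecreasing subsequence. To see this, fix generators $\mathcal{C}=\langle \mathbf{c}_1,\ldots,\mathbf{c}_m\rangle$ and consider the surjective monoid homomorphism $\pi\colon \mathbb{N}^m\to\mathcal{C}$, $\pi(\mathbf{u})=\sum_{i=1}^m u_i\mathbf{c}_i$. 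Given an infinite sequence in $\mathcal{C}$, I choose a $\pi$-preimage of each term and apply Dickson's lemma to $\mathbb{N}^m$ to extract a subsequence whose preimages are nondecreasing for the componentwise order; applying $\pi$ and using that it is a monoid homomorphism, the corresponding terms become $\leq_{\mathcal{C}}$-nondecreasing. Since $\mathcal{C}$ is pointed ($\mathcal{C}\cap(-\mathcal{C})=\{\mathbf{0}\}$), the relation $\leq_{\mathcal{C}}$ is a genuine partial order, so a nondecreasing chain of distinct terms is strictly increasing.

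With this in hand I argue by contradiction. Suppose $M$ is infinite. Enumerating its distinct elements and applying the above, I obtain an infinite strictly increasing chain $\mathbf{g}_1\leq_{\mathcal{C}}\mathbf{g}_2\leq_{\mathcal{C}}\cdots$ in $M$. Set $P_k=\mathbf{g}_{k+1}-\mathbf{g}_1$; then $P_k\in\mathcal{C}\setminus\{\mathbf{0}\}$ for every $k\geq 1$, and the $P_k$ are pairwise distinct because the $\mathbf{g}_{k+1}$ are. The key observation is that $\mathbf{g}_{k+1}=\mathbf{g}_1+P_k$ with $\mathbf{g}_1\in S\setminus\{\mathbf{0}\}$: if $P_k$ belonged to $S$, this would exhibit $\mathbf{g}_{k+1}$ as a sum of two nonzero elements of $S$, contradicting $\mathbf{g}_{k+1}\in M$. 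Hence $P_k\in\mathcal{C}\setminus S$ for all $k$. But $\mathcal{C}\setminus S$ is finite by the $\mathcal{C}$-cofiniteness of $S$, while $\{P_k\mid k\geq 1\}$ is infinite, a contradiction. Therefore $M$ is finite and $S=\langle M\rangle$ is finitely generated.

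The step I expect to be the main obstacle is establishing the correct well-partial-order. It is tempting to apply Dickson's lemma directly to $\mathbb{N}^d$, but that yields only the componentwise order, for which the difference of two elements of $\mathcal{C}$ need not lie in $\mathcal{C}$ (the cone spanned by $\mathcal{C}$ need not be all of $\mathbb{N}^d$); consequently the telescoping elements $P_k$ might fail to belong to $\mathcal{C}$ and the argument would break. Passing through the generators of $\mathcal{C}$ via $\pi$ repairs exactly this point, transferring the well-quasi-ordering of $\mathbb{N}^m$ to the intrinsic order $\leq_{\mathcal{C}}$. The remaining steps are then routine, the only other delicate point being the simple but essential remark that an irreducible element cannot absorb any nonzero element of $S$.
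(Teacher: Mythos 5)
Your proof is correct, and it takes a genuinely different route from the one in the paper. The paper's argument is iterative and constructive: it fixes a term order $\preceq$, writes $\mathcal{C}\setminus S=\{\mathbf{h}_1\prec\cdots\prec\mathbf{h}_r\}$, shows that the $\preceq$-smallest gap $\mathbf{h}_1$ must be a minimal generator of $\mathcal{C}$ (any splitting $\mathbf{h}_1=\mathbf{a}+\mathbf{b}$ with $\mathbf{a},\mathbf{b}\in\mathcal{C}\setminus\{\mathbf{0}\}$ forces $\mathbf{a},\mathbf{b}\prec\mathbf{h}_1$, hence $\mathbf{a},\mathbf{b}\in S$ and then $\mathbf{h}_1\in S$, absurd), and then peels off the gaps one at a time, using the explicit fact that deleting a minimal generator $\mathbf{h}$ from $\langle\mathbf{h},\mathbf{g}_2,\ldots,\mathbf{g}_n\rangle$ leaves the finitely generated monoid $\langle\mathbf{g}_2,\ldots,\mathbf{g}_n,\mathbf{g}_2+\mathbf{h},\ldots,\mathbf{g}_n+\mathbf{h},2\mathbf{h},3\mathbf{h}\rangle$; after $r$ steps this exhibits an explicit finite generating set of $S$. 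You instead prove directly that the set $M$ of irreducibles of $S$ is finite (relying on the standard fact, recorded in the paper's introduction, that $M$ generates $S$), by transporting Dickson's lemma along the factorization map $\pi\colon\mathbb{N}^m\to\mathcal{C}$ so that $\leq_{\mathcal{C}}$ becomes a well-partial-order, and then observing that an infinite chain $\mathbf{g}_1\leq_{\mathcal{C}}\mathbf{g}_2\leq_{\mathcal{C}}\cdots$ of distinct elements of $M$ would make the infinitely many distinct differences $\mathbf{g}_{k+1}-\mathbf{g}_1$ all lie in $\mathcal{C}\setminus S$, contradicting cofiniteness. Your two delicate points are both handled correctly: pulling the order back through $\pi$ (rather than using the componentwise order on $\mathbb{N}^d$) is exactly what makes the differences land in $\mathcal{C}$, and the absorption remark is a mirror image of the paper's key step --- the paper shows the least gap is irreducible in $\mathcal{C}$, while you show that differences of irreducibles of $S$ are gaps. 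As for what each approach buys: the paper's peeling argument needs only a term order and yields explicit generators of $S$ from those of $\mathcal{C}$ together with the gap list, in keeping with the paper's computational aims; yours is shorter, dispenses with the auxiliary lemma on removing a minimal generator, and even gives the quantitative bound that any $\leq_{\mathcal{C}}$-chain in $M$ has at most $|\mathcal{C}\setminus S|+1$ elements, but it is non-constructive and produces no generating set.
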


\begin{proof}
Let $\mathcal{C}\setminus S=\{\mathbf{h}_{1}\prec\cdots\prec \mathbf{h}_r\}$ for some term order $\preceq$, with $\mathcal{C}=\langle \mathbf{g}_{1},\mathbf{g}_{2},\ldots,\mathbf{g}_{n}\rangle $. We show that $\mathbf{h}_{1}$ is a minimal generator of $\mathcal{C}$. In fact, suppose $\mathbf{h}_1=\mathbf{a}+\mathbf{b}$ with $\mathbf{a},\mathbf{b}\in \mathcal{C}\setminus \{\mathbf{0}\}$. In particular, $\mathbf{h}_1\neq \mathbf{a}$ and $\mathbf{h}_1\neq \mathbf{b}$. If $\mathbf{h}_1 \prec \mathbf{a}$, then $\mathbf{h}_1\prec \mathbf{h}_1+\mathbf{b}\prec \mathbf{a}+\mathbf{b}$, which contradicts $\mathbf{h}_1=\mathbf{a}+\mathbf{b}$. Hence, $\mathbf{a}\prec \mathbf{h}_1$ and by a similar argument, $\mathbf{b}\prec \mathbf{h}_1$. As a consequence, by the minimality of $\mathbf{h}_1$ we have $\mathbf{a},\mathbf{b}\in S$, obtaining $\mathbf{h}_1\in S$, a contradiction. So, $\mathbf{h}_1$ is a minimal generator of $\mathcal{C}$ and we can suppose $\mathbf{h}_{1}=\mathbf{g}_{1}$. We can argue that $\mathcal{C}\setminus \{\mathbf{h}_1\}=\langle \mathbf{g}_{2},\ldots,\mathbf{g}_{n}, \mathbf{g}_{2}+\mathbf{h}_1,\ldots,\mathbf{g}_{n}+\mathbf{h}_1,2\mathbf{h}_1,3\mathbf{h}_1\rangle$. We can use the same argument to show that $\mathbf{h}_2$ is a minimal generator of $\mathcal{C}\setminus \{\mathbf{h}_1\}$  and so on, until we provide a finite set of generators of $\mathcal{C}\setminus \{\mathbf{h}_1,\ldots,\mathbf{h}_r\}$.
\end{proof}

\noindent Let $\mathcal{C}=\langle \mathbf{g}_{1},\mathbf{g}_{2},\ldots,\mathbf{g}_{n}\rangle \subseteq \mathbb{N}^{d}$ be an affine semigroup. Denote by $\mathbf{e}_1,\ldots,\mathbf{e}_n$ the standard basis vectors of the vector space $\mathbb{R}^n$; in particular, $\mathbb{N}^n=\langle \mathbf{e}_1,\ldots,\mathbf{e}_n \rangle$ as a monoid. We consider the following map:
$$f_\mathcal{C}:\mathbb{N}^{n}\longrightarrow \mathcal{C},\qquad \sum_{i=1}^{n}a_{i}\mathbf{e}_{i}\longmapsto \sum_{i=1}^{n}a_{i}\mathbf{g}_{i}.$$
\noindent Then, $f_\mathcal{C}$ is a monoid epimorphism. Observe that if $\mathbf{a}\in \mathcal{C}$, then $f^{-1}_\mathcal{C}(\mathbf{a})$ is the set of factorizations of $\mathbf{a}$ as combination of vectors in the set $\{\mathbf{g}_{1},\mathbf{g}_{2},\ldots,\mathbf{g}_{n}\}$.

%
%

\begin{lemma}
\label{proje}
Let $\mathcal{C}=\langle \mathbf{g}_{1},\mathbf{g}_{2},\ldots,\mathbf{g}_{n}\rangle \subseteq \mathbb{N}^d$ be an affine semigroup. Suppose $S$ is a submonoid of $\mathcal{C}$. Then 
\begin{enumerate}
\item $\mathcal{C}\setminus S$ is a finite set if and only if $\mathbb{N}^n\setminus f_\mathcal{C}^{-1}(S)$ is a finite set.
\item $\mathcal{C}\setminus S=\{f_\mathcal{C}(\mathbf{h})\mid \mathbf{h}\in \mathbb{N}^n\setminus f_\mathcal{C}^{-1}(S)\}$.
\end{enumerate}
\end{lemma}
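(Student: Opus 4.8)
The plan is to exploit the fact that $f_{\mathcal{C}}$ is a surjective monoid homomorphism and that, since $S$ is a submonoid of $\mathcal{C}$, its preimage $f_{\mathcal{C}}^{-1}(S)$ is a submonoid of $\mathbb{N}^n$ whose image under $f_{\mathcal{C}}$ is exactly $S$. The key structural observation is that the sets $\mathcal{C}\setminus S$ and $\mathbb{N}^n\setminus f_{\mathcal{C}}^{-1}(S)$ are related through $f_{\mathcal{C}}$ precisely because, for $\mathbf{x}\in\mathbb{N}^n$, we have $f_{\mathcal{C}}(\mathbf{x})\in S$ if and only if $\mathbf{x}\in f_{\mathcal{C}}^{-1}(S)$ (this is just the definition of preimage), and by surjectivity every element of $\mathcal{C}$ is hit. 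I would establish part (2) first, since it is essentially a set-theoretic identity and part (1) will follow from it combined with a finiteness argument on fibers.

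For part (2), I would argue by double inclusion. If $\mathbf{a}\in\mathcal{C}\setminus S$, then by surjectivity there is $\mathbf{h}\in\mathbb{N}^n$ with $f_{\mathcal{C}}(\mathbf{h})=\mathbf{a}$; since $\mathbf{a}\notin S$, we have $\mathbf{h}\notin f_{\mathcal{C}}^{-1}(S)$, so $\mathbf{a}$ lies in the right-hand set. Conversely, if $\mathbf{h}\in\mathbb{N}^n\setminus f_{\mathcal{C}}^{-1}(S)$, then $f_{\mathcal{C}}(\mathbf{h})\in\mathcal{C}$ trivially, and $f_{\mathcal{C}}(\mathbf{h})\notin S$ by the very definition of $\mathbf{h}\notin f_{\mathcal{C}}^{-1}(S)$; hence $f_{\mathcal{C}}(\mathbf{h})\in\mathcal{C}\setminus S$. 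This proves the two inclusions and hence the equality in (2).

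For part (1), the backward direction is immediate from (2): if $\mathbb{N}^n\setminus f_{\mathcal{C}}^{-1}(S)$ is finite, then $\mathcal{C}\setminus S$ is the image of a finite set under $f_{\mathcal{C}}$, hence finite. The forward direction is where the genuine content lies, and I expect it to be the main obstacle. The difficulty is that a single element $\mathbf{a}\in\mathcal{C}\setminus S$ may have infinitely many preimages in $\mathbb{N}^n$, so finiteness of $\mathcal{C}\setminus S$ does not mechanically force finiteness of $\mathbb{N}^n\setminus f_{\mathcal{C}}^{-1}(S)$; one must instead control the preimage directly. To do this I would show that $\mathbb{N}^n\setminus f_{\mathcal{C}}^{-1}(S)$ is itself cofinite in $\mathbb{N}^n$ by verifying that $f_{\mathcal{C}}^{-1}(S)$ is a $\mathbb{N}^n$-cofinite submonoid, i.e.\ a generalized numerical semigroup in $\mathbb{N}^n$.

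The cleanest route is a Dickson's Lemma argument on the complement. The set $\mathbb{N}^n\setminus f_{\mathcal{C}}^{-1}(S)$ has the property that if $\mathbf{x}$ lies in it and $\mathbf{x}=\mathbf{y}+\mathbf{z}$ with $f_{\mathcal{C}}(\mathbf{y}),f_{\mathcal{C}}(\mathbf{z})\in S$, then $f_{\mathcal{C}}(\mathbf{x})=f_{\mathcal{C}}(\mathbf{y})+f_{\mathcal{C}}(\mathbf{z})\in S$, a contradiction; so the complement of the preimage behaves like a gap set. I would show that $\mathbb{N}^n\setminus f_{\mathcal{C}}^{-1}(S)$ is a finite union of fibers $f_{\mathcal{C}}^{-1}(\mathbf{a})$ over the finitely many $\mathbf{a}\in\mathcal{C}\setminus S$, and that each such fiber is finite. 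Finiteness of each fiber is the crux: since $S$ is a $\mathcal{C}$-cofinite submonoid, one can fix a term order and, following the strategy in the preceding Proposition, observe that only boundedly many factorizations of a fixed $\mathbf{a}$ can avoid passing through $S$. More concretely, I would argue that if some fiber $f_{\mathcal{C}}^{-1}(\mathbf{a})$ with $\mathbf{a}\in\mathcal{C}\setminus S$ were infinite, then by Dickson's Lemma it would contain two comparable elements $\mathbf{x}\preceq_{\mathrm{componentwise}}\mathbf{x}'$, forcing $\mathbf{x}'-\mathbf{x}\in\mathbb{N}^n$ with $f_{\mathcal{C}}(\mathbf{x}'-\mathbf{x})=\mathbf{0}$; iterating and combining with the finiteness of $\mathcal{C}\setminus S$ yields elements of $\mathcal{C}\setminus S$ of arbitrarily large order under $\preceq$, contradicting finiteness. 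Assembling these pieces gives that $\mathbb{N}^n\setminus f_{\mathcal{C}}^{-1}(S)$ is finite, completing the forward direction.
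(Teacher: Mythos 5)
Your overall route is the same as the paper's: part (2) by surjectivity, the backward implication of (1) as an immediate consequence of (2), and the forward implication by writing $\mathbb{N}^n\setminus f_\mathcal{C}^{-1}(S)$ as the union of the fibers $f_\mathcal{C}^{-1}(\mathbf{a})$ over the finitely many $\mathbf{a}\in\mathcal{C}\setminus S$ and proving each fiber is finite. The paper disposes of fiber finiteness by citing the standard fact that every element of an affine semigroup has only finitely many factorizations; you attempt to prove it, and this is the one place where your argument, as written, fails.

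Your Dickson's Lemma step is the right idea: an infinite fiber contains two elements $\mathbf{x}<\mathbf{x}'$ (componentwise), and then $\mathbf{z}=\mathbf{x}'-\mathbf{x}\in\mathbb{N}^n\setminus\{\mathbf{0}\}$ satisfies $f_\mathcal{C}(\mathbf{z})=\mathbf{0}$. But your concluding move --- ``iterating and combining with the finiteness of $\mathcal{C}\setminus S$ yields elements of $\mathcal{C}\setminus S$ of arbitrarily large order'' --- does not work: iterating produces the elements $\mathbf{x}+k\mathbf{z}$, which all lie in the \emph{same} fiber over the \emph{same} element $\mathbf{a}$, so no new elements of $\mathcal{C}\setminus S$ are created and no contradiction with its finiteness can appear. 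The contradiction is in fact immediate and needs neither iteration nor the hypothesis on $S$: writing $f_\mathcal{C}(\mathbf{z})=\sum_{i} z_i\mathbf{g}_i=\mathbf{0}$ with some $z_j>0$, all summands lie in $\mathbb{N}^d$, so $\sum_i z_i\mathbf{g}_i\geq z_j\mathbf{g}_j$ componentwise, and $z_j\mathbf{g}_j\neq\mathbf{0}$ because the generators are nonzero vectors of $\mathbb{N}^d$; hence the sum cannot be $\mathbf{0}$. This also corrects a conceptual slip in your write-up: fiber finiteness is \emph{unconditional} (it holds for every element of $\mathcal{C}$, with no cofiniteness assumption and no term order needed), contrary to your claim that it is ``the crux'' relying on $S$ being $\mathcal{C}$-cofinite; the cofiniteness of $S$ enters only to ensure that the union is over finitely many fibers. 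With that one step repaired, your proof is correct and coincides with the paper's.
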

\begin{proof}
Denote $T=f_\mathcal{C}^{-1}(S)$. Observe that $T$ is a submonoid of $\mathbb{N}^n$. Moreover, since $f_\mathcal{C}$ is a surjective function, we have $\mathbb{N}^n \setminus T=f_\mathcal{C}^{-1}(\mathcal{C})\setminus f_\mathcal{C}^{-1}(S)=f_\mathcal{C}^{-1}(\mathcal{C}\setminus S)$. 
From this equality, we easily obtain that $|\mathcal{C}\setminus S|\leq |\mathbb{N}^{n}\setminus T|$. Therefore, if $T$ is $\mathbb{N}^n$-cofinite, then $\mathcal{C}\setminus S$ is a finite set. Moreover, for all $\mathbf{x}\in \mathcal{C}$, the set $f_\mathcal{C}^{-1}(\mathbf{x})=\{\mathbf{h}\in \mathbb{N}^n \mid f_\mathcal{C}(\mathbf{h})=\mathbf{x}\}$ is finite. In fact, each element of an affine semigroup has a finite number of factorizations. In particular, if $\mathcal{C}\setminus S$ is finite, then $\mathbb{N}^n \setminus T=\bigcup_{\mathbf{x}\in \mathcal{C}\setminus S}f_\mathcal{C}^{-1}(\mathbf{x})$ is finite. So, claim (1) is proved. Furthermore, by $\mathbb{N}^n \setminus T=f_\mathcal{C}^{-1}(\mathcal{C}\setminus S)$, since $f_\mathcal{C}$ is surjective, we have $f_\mathcal{C}(\mathbb{N}\setminus T)=f_\mathcal{C}(f_\mathcal{C}^{-1}(\mathcal{C}\setminus S))=\mathcal{C}\setminus S$. In particular, we obtain claim (2). 
\end{proof}

We remind that a characterization  for a submonoid $T\subseteq \mathbb{N}^n$ to be $\mathbb{N}^n$-cofinite is provided in \cite[Theorem 2.8]{Analele}. For completeness, we state the result here, since it will be useful later.


\begin{theorem}[\cite{Analele}]
Let $A \subseteq \mathbb{N}^n$. Then $\langle A \rangle$ is $\mathbb{N}^n$-cofinite if and only if the set $A$ fulfills each one of the
following conditions: 
\begin{enumerate}
\item For all $i\in\{1,\ldots,n\}$ there exist $a_1^{(i)}\mathbf{e}_i, a_2^{(i)}\mathbf{e}_i,\ldots, a_{s_i}^{(i)}\mathbf{e}_i\in A$, with $s_i\in \mathbb{N}\setminus \{0\}$, $a_j^{(i)}\in \mathbb{N}\setminus\{0\}$, such that $\gcd (a_1^{(i)},a_2^{(i)},\ldots,a_{s_i}^{(i)})=1$.
\item For each $i,j\in \{1,\ldots,n\}$, with $i\neq j$, there exists $n_{i}^{(j)}\in \mathbb{N}$ such that $\mathbf{e}_{i}+n_{i}^{(j)}\mathbf{e}_{j}\in A$. 
\end{enumerate} 
\label{GNS}
\end{theorem}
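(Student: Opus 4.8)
The plan is to prove the two implications separately, writing $S=\langle A\rangle$ throughout and exploiting two elementary features of $\mathbb{N}^n$: positivity of coordinates (so that no cancellation occurs in a factorization) and the fact, recalled in the introduction, that every generating set of $S$ must contain the minimal generators $(S\setminus\{\mathbf{0}\})\setminus(S\setminus\{\mathbf{0}\}+S\setminus\{\mathbf{0}\})$. For necessity, suppose $S$ is $\mathbb{N}^n$-cofinite. To obtain (1) I would restrict to the $i$-th axis: if $k\mathbf{e}_i=\sum_l c_l\mathbf{a}_l$ is a factorization with $\mathbf{a}_l\in A$ and $c_l>0$, then comparing the $m$-th coordinate for each $m\neq i$ forces $(\mathbf{a}_l)_m=0$, so every generator actually used lies on the $i$-th axis; hence $S\cap\mathbb{N}\mathbf{e}_i=\langle A\cap\mathbb{N}\mathbf{e}_i\rangle$. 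Since $\mathbb{N}^n\setminus S$ is finite, this submonoid is cofinite in $\mathbb{N}\mathbf{e}_i\cong\mathbb{N}$, i.e.\ a numerical semigroup, which forces the gcd of its generators (the numbers $a_j^{(i)}$) to be $1$, giving (1). For (2) I would fix $i\neq j$ and, using cofiniteness, let $k_0$ be the least integer with $\mathbf{e}_i+k_0\mathbf{e}_j\in S$; a short computation shows $\mathbf{e}_i+k_0\mathbf{e}_j$ is a minimal generator: in any decomposition $\mathbf{e}_i+k_0\mathbf{e}_j=\mathbf{u}+\mathbf{v}$ with $\mathbf{u},\mathbf{v}\in S\setminus\{\mathbf{0}\}$, positivity forces all coordinates outside $\{i,j\}$ to vanish and the $i$-th coordinate $1$ to sit on one summand, say $\mathbf{u}$; then $\mathbf{v}=v_j\mathbf{e}_j$ with $v_j\geq 1$ and $\mathbf{u}=\mathbf{e}_i+(k_0-v_j)\mathbf{e}_j\in S$ with $k_0-v_j<k_0$, contradicting minimality. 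Being a minimal generator, $\mathbf{e}_i+k_0\mathbf{e}_j$ must lie in $A$, which is exactly (2).

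For sufficiency, assume (1) and (2). Condition (1) makes each axial submonoid a numerical semigroup; let $f_i$ be its Frobenius number and $F=\max_i f_i$, so that $k\mathbf{e}_i\in S$ for every integer $k>f_i$. Condition (2) provides coupling vectors $\mathbf{e}_i+c_{ij}\mathbf{e}_j\in S$ with $c_{ij}=n_i^{(j)}$; set $C=\max_{i\neq j}c_{ij}$ and $D=nFC$. I would then show that any $\mathbf{x}$ whose largest coordinate $x_\ell$ exceeds $D+F$ lies in $S$, by writing $\mathbf{x}=\sum_{i\in T}x_i(\mathbf{e}_i+c_{i\ell}\mathbf{e}_\ell)+\sum_{i\notin T,\,i\neq\ell}x_i\mathbf{e}_i+r\mathbf{e}_\ell$, where $T=\{i:x_i\leq f_i\}$ collects the ``small'' coordinates and $r=x_\ell-\sum_{i\in T}x_ic_{i\ell}$. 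The coupling terms reconstruct the small coordinates while dumping a bounded surplus $\sum_{i\in T}x_ic_{i\ell}\leq nFC=D$ into coordinate $\ell$; the pure axial terms reconstruct the remaining large coordinates; and $r>F\geq f_\ell$ guarantees $r\mathbf{e}_\ell\in S$. Hence every $\mathbf{x}$ with $\max_i x_i>D+F$ belongs to $S$, so $\mathbb{N}^n\setminus S\subseteq[0,D+F]^n$ is finite.

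The main obstacle is the bookkeeping in the sufficiency step, and specifically avoiding a circularity when choosing the threshold that separates small from large coordinates: if smallness were measured against a threshold one is still trying to fix, the surplus dumped into the large coordinate would itself scale with that threshold. The key idea that resolves this is to measure smallness against the fixed Frobenius numbers $f_i$, so that the total surplus $\sum_{i\in T}x_ic_{i\ell}$ is bounded by the constant $D=nFC$ regardless of $\mathbf{x}$. On the necessity side the only delicate point is the observation that the minimal coupling element $\mathbf{e}_i+k_0\mathbf{e}_j$ is forced to be a genuine minimal generator, hence to belong to \emph{every} generating set $A$; this is what makes (2) a condition on $A$ itself rather than merely on $S$.
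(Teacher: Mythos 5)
You should know at the outset that the paper contains no proof of this statement to compare yours against: Theorem~\ref{GNS} is quoted verbatim from \cite{Analele} (``for completeness, we state the result here''), and is then used as a black box in the proof of Theorem~\ref{Csem}. So your proposal can only be judged on its own merits, and on those merits it is correct and self-contained. The necessity half works: positivity of coordinates does force every factorization of $k\mathbf{e}_i$ to use only generators lying on the $i$-th axis, so $S\cap\mathbb{N}\mathbf{e}_i=\langle A\cap\mathbb{N}\mathbf{e}_i\rangle$ is a cofinite submonoid of $\mathbb{N}\mathbf{e}_i\cong\mathbb{N}$; and your argument that $\mathbf{e}_i+k_0\mathbf{e}_j$ (with $k_0$ minimal) lies in $(S\setminus\{\mathbf{0}\})\setminus\bigl(S\setminus\{\mathbf{0}\}+S\setminus\{\mathbf{0}\}\bigr)$, hence in \emph{every} generating set, is precisely what turns (2) into a condition on $A$ rather than on $S$ --- the distinction the paper itself stresses in the discussion following Theorem~\ref{Csem}. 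The sufficiency half also checks out: for $i\in T$ one has $x_ic_{i\ell}\leq f_iC\leq FC$, so $r\geq x_\ell-nFC>F\geq f_\ell$, and all three blocks of your decomposition lie in $\langle A\rangle$; measuring ``smallness'' against the fixed Frobenius numbers $f_i$ rather than against the threshold being constructed is indeed the step that avoids circularity. Two small points you should patch. First, in the necessity of (1) your argument yields that the gcd of the (possibly infinite) set $\{a\in\mathbb{N}\setminus\{0\}\mid a\mathbf{e}_i\in A\}$ equals $1$, whereas the statement asks for finitely many elements of $A$ with gcd $1$; add the standard remark that the gcd of a set of positive integers is attained on a finite subset (the gcd of an increasing chain of finite subsets stabilizes). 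Second, fix a convention for the degenerate case where an axial semigroup is all of $\mathbb{N}$, so that $f_i=-1$; replacing each $f_i$ by $\max(f_i,0)$ keeps every inequality in your sufficiency argument valid. With these touches your argument is a complete proof of the cited result, and in fact its sufficiency step is close in spirit to how the paper later deduces cofiniteness in Theorem~\ref{Csem}, by exhibiting enough axial and coupling elements inside the submonoid.
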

\noindent Next, we provide the mentioned characterization for a submonoid $S\subseteq \mathcal{C}$ to be $\mathcal{C}$-cofinite.

\begin{theorem}
Let $\mathcal{C}=\langle \mathbf{g}_{1},\mathbf{g}_{2},\ldots,\mathbf{g}_{n}\rangle \subseteq \mathbb{N}^{d}$ be an affine semigroup and $S\subseteq \mathcal{C}$ be a submonoid. Then $S$ is $\mathcal{C}$-cofinite if and only if the following two conditions are verified:
\begin{enumerate}
\item For all $i\in\{1,\ldots,n\}$, the set $S_{i}=\{\lambda \in \mathbb{N}\mid \lambda\mathbf{g}_{i}\in S\}$ is a numerical semigroup.
\item For each $i,j\in \{1,\ldots,n\}$, with $i\neq j$, there exists $n_{i}^{(j)}\in \mathbb{N}$ such that $\mathbf{g}_{i}+n_{i}^{(j)}\mathbf{g}_{j}\in S$. 
 
\end{enumerate} 
\label{Csem}
\end{theorem}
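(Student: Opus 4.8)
The plan is to reduce the statement to the already-known characterization of $\mathbb{N}^n$-cofiniteness in Theorem~\ref{GNS} by transporting everything through the epimorphism $f_\mathcal{C}$, exactly as set up in Lemma~\ref{proje}. Write $T = f_\mathcal{C}^{-1}(S)$, which is a submonoid of $\mathbb{N}^n$ (as noted in the proof of Lemma~\ref{proje}). By Lemma~\ref{proje}(1), $S$ is $\mathcal{C}$-cofinite if and only if $T$ is $\mathbb{N}^n$-cofinite, so it suffices to show that conditions (1) and (2) of the present statement are equivalent to the $\mathbb{N}^n$-cofiniteness of $T$. To invoke Theorem~\ref{GNS} I would apply it with the generating set $A = T$ itself, which is legitimate since $\langle T\rangle = T$ and since the conditions of Theorem~\ref{GNS} only concern the existence of finitely many prescribed elements in $A$. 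This reduces the task to matching the two conditions of Theorem~\ref{GNS}, specialized to $A = T$, against conditions (1) and (2) here.

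Next, I would translate each condition across $f_\mathcal{C}$. For the diagonal directions, observe that $\lambda \mathbf{e}_i \in T$ if and only if $f_\mathcal{C}(\lambda \mathbf{e}_i) = \lambda \mathbf{g}_i \in S$, that is, if and only if $\lambda \in S_i$; hence $\{\lambda \in \mathbb{N} : \lambda \mathbf{e}_i \in T\} = S_i$. For the mixed directions, $\mathbf{e}_i + m \mathbf{e}_j \in T$ if and only if $\mathbf{g}_i + m \mathbf{g}_j \in S$, so condition (2) of Theorem~\ref{GNS} for $A = T$ is verbatim condition (2) of the present statement.

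The one genuinely non-formal step is matching condition (1). Condition (1) of Theorem~\ref{GNS}, specialized to $A = T$, asks for finitely many nonzero $a_1^{(i)},\ldots,a_{s_i}^{(i)}$ with $a_j^{(i)} \mathbf{e}_i \in T$ and $\gcd(a_1^{(i)},\ldots,a_{s_i}^{(i)}) = 1$. By the previous paragraph this is precisely a condition on $S_i$, namely that $S_i$ contains nonzero elements of overall $\gcd$ equal to $1$. I would then use the standard fact that a submonoid of $\mathbb{N}$ is a numerical semigroup exactly when the $\gcd$ of its nonzero elements is $1$, together with the observation that the $\gcd$ of any set of integers is already realized by a finite subset. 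This yields the equivalence, in both directions, between condition (1) of Theorem~\ref{GNS} for $A = T$ and condition (1) of the present statement.

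Assembling the pieces, conditions (1) and (2) here hold if and only if conditions (1) and (2) of Theorem~\ref{GNS} hold for $A = T$, if and only if $T$ is $\mathbb{N}^n$-cofinite, if and only if (by Lemma~\ref{proje}(1)) $S$ is $\mathcal{C}$-cofinite. I expect the main obstacle to be bookkeeping rather than conceptual: one must be careful that $T$ need not be finitely generated a priori (it is so only once $\mathcal{C}$-cofiniteness is established), which is exactly why I apply Theorem~\ref{GNS} to $A = T$ itself instead of to a purported finite generating set, and one must check the $\gcd$/numerical-semigroup equivalence cleanly in both directions.
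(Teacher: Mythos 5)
Your proof is correct. It rests on the same two pillars as the paper's argument --- Lemma~\ref{proje} and Theorem~\ref{GNS} --- but you deploy them more symmetrically, and the execution genuinely differs. The paper proves necessity \emph{directly}, by exhibiting infinite families inside $\mathcal{C}\setminus S$ (if some $S_i$ is not a numerical semigroup, the infinitely many elements $\lambda\mathbf{g}_i$ with $\lambda\notin S_i$ are gaps; if condition (2) fails for some pair $i\neq j$, all the elements $\mathbf{g}_i+\mu\mathbf{g}_j$ are gaps), and it invokes the transport machinery only for sufficiency, where it builds an explicit \emph{finite} set $\mathcal{G}\subseteq\mathbb{N}^n$ out of generators of the numerical semigroups $S_i$ and the minimal shifts $n_i^{(j)}$, checks that $\mathcal{G}$ satisfies the hypotheses of Theorem~\ref{GNS}, and concludes from $\langle\mathcal{G}\rangle\subseteq T=f_\mathcal{C}^{-1}(S)$ together with Lemma~\ref{proje}. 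You instead run a single two-way equivalence chain: $S$ is $\mathcal{C}$-cofinite iff $T$ is $\mathbb{N}^n$-cofinite (Lemma~\ref{proje}(1)), iff $T$, taken as its own generating set $A=T$ in Theorem~\ref{GNS}, satisfies the two conditions there, which translate verbatim into conditions (1) and (2) via $f_\mathcal{C}(\lambda\mathbf{e}_i)=\lambda\mathbf{g}_i$, $f_\mathcal{C}(\mathbf{e}_i+m\mathbf{e}_j)=\mathbf{g}_i+m\mathbf{g}_j$, and the standard gcd characterization of numerical semigroups (both of your auxiliary facts --- gcd $1$ iff cofinite in $\mathbb{N}$, and gcd realized by a finite subset --- are sound, and they are exactly what is needed to match condition (1) in both directions). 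Your key observation, that Theorem~\ref{GNS} is stated for an \emph{arbitrary}, possibly infinite, subset $A\subseteq\mathbb{N}^n$ and may therefore be applied to $A=T$ with $\langle T\rangle=T$, is what lets you bypass both the paper's direct counting for necessity and its construction of $\mathcal{G}$ for sufficiency; you are also right that this sidesteps any worry about $T$ being finitely generated a priori. What the paper's version buys is constructive content --- the finite witness set $\mathcal{G}$, built from the data in conditions (1) and (2), is precisely what its subsequent computational procedures exploit --- while yours buys uniformity and brevity.
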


\begin{proof}
\emph{Necessity}. It is not difficult to see that the set $S_i$ is a monoid for all $i\in \{1,\ldots,n\}$. If the first condition is not verified, then there exists $i$ such that the set $\{\lambda \in \mathbb{N}\mid \lambda\mathbf{g}_{i}\notin S\}\subseteq \mathcal{C}\setminus S$ is not finite. If the second condition is not verified, then for some $i,j$, with $i\neq j$, we have that $\mathbf{g}_{i}+\mu \mathbf{g}_{j}\notin S$ for all $\mu \in \mathbb{N}$. 

\emph{Sufficiency}. We assume that $S_{i}=\langle \lambda_{1}^{(i)}, \lambda_{2}^{(i)},\ldots,\lambda_{s_{i}}^{(i)}\rangle$ is a numerical semigroup for all $i\in \{1,\ldots,n\}$. In particular, $\langle\lambda_{1}^{(i)}\mathbf{g}_{i}, \lambda_{2}^{(i)}\mathbf{g}_i,\ldots,\lambda_{s_{i}}^{(i)}\mathbf{g}_i\rangle \subseteq S$ and $\gcd(\lambda_{1}^{(i)}, \lambda_{2}^{(i)},\ldots,\lambda_{s_{i}}^{(i)})=1$, for each $i\in \{1,\ldots,n\}$. Moreover, for all $i,j\in \{1,\ldots,n\}$, with $i\neq j$, we can consider $n_{i}^{(j)}=\min \{n\in\mathbb{N}\mid \mathbf{g}_{i}+n\mathbf{g}_{j}\in S\}$. For each $i\in \{1,\ldots,n\}$, we set $G_{i}=\{ \lambda_{1}^{(i)}\mathbf{e}_{i},  \lambda_{2}^{(i)}\mathbf{e}_{i}, \ldots, \lambda_{s_{i}}^{(i)}\mathbf{e}_{i}\}\subseteq \mathbb{N}^{n}$. Let $\mathcal{G}=\left(\cup_{i=1}^{n}G_{i}\right) \cup \{\mathbf{e}_{i}+n_{i}^{(j)}\mathbf{e}_{j}\mid i\neq j\}\subseteq \mathbb{N}^n$. The set $\mathcal{G}$ satisfies the conditions of Theorem~\ref{GNS}, so $\langle \mathcal{G}\rangle$ is $\mathbb{N}^n$-cofinite. Consider the monoid $T=f_\mathcal{C}^{-1}(S)\subseteq \mathbb{N}^n$. Observe that, if $\mathbf{x}\in \mathcal{G}$, then $f_\mathcal{C}(\mathbf{x})\in S$, that is, $\mathcal{G}\subseteq T$. In particular, $\langle \mathcal{G} \rangle \subseteq T$ and, as a consequence, $\mathbb{N}^n\setminus T \subseteq \mathbb{N}^n\setminus \langle \mathcal{G}\rangle$. 
Therefore, $T$ is $\mathbb{N}^{n}$-cofinite and, by Lemma~\ref{proje}, $\mathcal{C}\setminus S$ is finite. 
\end{proof}


Let $S$ and $\mathcal{C}$ be affine semigroups, $S\subseteq \mathcal{C}$. We recall that for $\mathcal{C}=\mathcal{C}_S$ a characterization so that $|\mathcal{C}_S\setminus S|<\infty$ has been provided in \cite[Theorem 9]{diaz2022characterizing} and in the same paper the authors provide a procedure to compute $\mathcal{C}_S\setminus S$.

 Theorem~\ref{Csem}, in this form, can be viewed as a generalization of Theorem~\ref{GNS}. The main difference is the following: in the case $\mathcal{C}=\mathbb{N}^d$ (and the same can be trivially considered in the case $\mathcal{C}\cong \mathbb{N}^r$ for some positive integer $r$) the elements satisfying the two conditions of Theorem~\ref{GNS} belong to every set of generators of $S$. In particular, they belong to the minimal set of generators of $S$. In the case $\mathcal{C}\subsetneq \mathbb{N}^d$ (and $\mathcal{C}\ncong \mathbb{N}^r$ for all positive integer $r$) instead we have to look for these elements in the whole semigroup $S$. It is not possible, in general, to consider only a set of the generators of $S$: see, for instance, Example~\ref{example2} and Example~\ref{exa:C2} below, where the elements $3\mathbf{g}_1,\mathbf{g}_1+2\mathbf{g}_3$ belong to $S$ but they do not belong to its set of minimal generators. 

\medskip
\noindent Now, we want to suggest a possible way to verify computationally the two conditions of Theorem~\ref{Csem}. We assume $\mathcal{C}=\langle \mathbf{g}_1,\ldots,\mathbf{g}_n\rangle$ and suppose $S=\langle \mathbf{n}_{1},\mathbf{n}_{2},\ldots, \mathbf{n}_{t}\rangle$. So, we focus on the following:

\begin{enumerate}
\item For all $i\in \{1,\ldots,n\}$, the set $S_{i}=\{\lambda \in \mathbb{N}\mid \lambda\mathbf{g}_{i}\in S\}$ is a numerical semigroup.
\item For each $i,j\in \{1,\ldots,n\}$, $i\neq j$, there exists $n_{i}^{(j)}\in \mathbb{N}$ such that $\mathbf{g}_{i}+n_{i}^{(j)}\mathbf{g}_{j}\in S$. 
\end{enumerate}

\medskip

\noindent \textbf{Condition (1)}. Let $i\in \{1,\ldots,n\}$. Observe that:
$$\{\lambda \in \mathbb{N}\mid \lambda\textbf{g}_{i}\in S\}=\left \lbrace \lambda\in \mathbb{N}\mid \sum_{j=1}^t\lambda_j\mathbf{n}_j+\lambda (-\textbf{g}_{i})=0 ,\text{ for some }\lambda_1,\ldots,\lambda_n \in \mathbb{N}\right\rbrace.$$

\noindent Let $A^{(i)}=[\mathbf{n}_{1},\mathbf{n}_{2},\ldots, \mathbf{n}_{t}, -\mathbf{g}_i]$ be the matrix whose columns are the elements of the finite set of generators of $S$ and the column vector related to $-\mathbf{g}_i\in \mathbb{Z}^d$. So $A^{(i)}$ has $t+1$ columns and entries in $\mathbb{Z}$. We identify every element $\textbf{x}\in \mathbb{N}^{t+1}$ with its column vector and consider the diophantine linear system $A^{(i)}\mathbf{x}=0$. It is easy to see that: 
$$\{\lambda \in \mathbb{N}\mid \lambda\textbf{g}_{i}\in S\}=\{x_{t+1}\mid A^{(i)}\textbf{x}=0 \ \ \mbox{with}\ \mathbf{x}=(x_1,\ldots,x_t,x_{t+1})\in \ \mathbb{N}^{t+1}\}.$$  

\noindent Let $M_i\subseteq \mathbb{N}^{t+1}$ be the set of non-negative integer solutions of the homogeneous diophantine linear system $A^{(i)}\mathbf{x}=0$. It is known that $M_i$ is an affine semigroup in $\mathbb{N}^{t+1}$ (see for instance \cite[Section 1]{rosales1998non-negative}), so there exists a finite set $B_i\subseteq \mathbb{N}^{t+1}$, such that $M_i=\langle B_i \rangle$. Hence, $S_i=\langle \{x_{t+1}\in \mathbb{N}\mid (x_1,\ldots,x_t,x_{t+1})\in B_i\}\rangle $. It is possible to perform these computations in the computer algebra software \texttt{GAP} (\cite{gap}) with the package \texttt{numericalsgps} (\cite{numericalsgps}). It is recommended to use also the package \texttt{NormalizInterface} (that is an interface in \texttt{GAP} for the software \texttt{Normaliz}, see \cite{Normaliz, NormalizInt}), in order to speed up the computation time. 

\begin{example}\label{example2} \rm
Consider the affine semigroups $\mathcal{C}=\langle (1,1),(1,2),(2,1),(3,1)\rangle$ and  $S=\mathcal{C}\setminus \{(1,1),(3,2),(2,3)\}=\langle (1,2),(2,1),(2,2),(3,1),(3,5)\rangle$. Let us compute a finite set of generators of $S_1=\{\lambda \in \mathbb{N}\mid \lambda\textbf{g}_{1}\in S\}$, where $\mathbf{g}_1=(1,1)$. We need to find the non-negative integer solutions of the linear diophantine system $A\mathbf{x}=0$, where $A$ has column vectors $(1,2),(2,1),(2,2),(3,1),(3,5),(-1,-1)$. 

\begin{verbatim}
gap> LoadPackage("num");;
gap> NumSgpsUseNormaliz();;
gap> A:=[[1,2],[2,1],[2,2],[3,1],[3,5],[-1,-1]];;
gap> n:=Length(A);;
gap> A:=TransposedMat(A);
[ [ 1, 2, 2, 3, 3, -1 ], [ 2, 1, 2, 1, 5, -1 ] ]
gap> sol:=HilbertBasisOfSystemOfHomogeneousEquations(A,[]);
[ [ 0, 0, 0, 1, 1, 6 ], [ 0, 0, 1, 0, 0, 2 ], [ 0, 2, 0, 0, 1, 7 ], 
  [ 1, 1, 0, 0, 0, 3 ], [ 2, 0, 0, 1, 0, 5 ] ]
gap> B:=List(sol,i->i[n]);
[ 6, 2, 7, 3, 5 ]
gap> Gcd(B);
1
gap> MinimalGenerators(NumericalSemigroup(B));
[ 2, 3 ]
\end{verbatim} 
Therefore, the previous computations allow to check that $S_1=\langle 2,3\rangle$.
\end{example}
 

In order to verify the first condition of Theorem~\ref{Csem} for elements related to an extreme ray $\mathbf{g}$ of $\mathcal{C}$ having greatest common divisor of its coordinates equals to $1$, it suffices an easier check on a finite set of generators of $S$, as stated by the following result.

\begin{proposition}
Let $\mathcal{C}=\langle \mathbf{g}_{1},\mathbf{g}_{2},\ldots,\mathbf{g}_{r},\mathbf{g}_{r+1},\mathbf{g}_{r+2},\ldots,\mathbf{g}_{r+m}\rangle\subseteq \mathbb{N}^d$ be an affine semigroup such that $\operatorname{cone}(\mathcal{C})=\operatorname{cone}(\{\mathbf{g}_1,\ldots,\mathbf{g}_r\})$. Assume that $\mathbf{g}_i\notin \operatorname{cone}(\{\mathbf{g}_1,\ldots,\mathbf{g}_r\}\setminus \{\mathbf{g}_i\})$ for some $i\in \{1,\ldots,r\}$ and that the greatest common divisor of the coordinates of $\mathbf{g}_i$ is $1$. Suppose $S\subseteq \mathcal{C}$ is a submonoid and denote by $A$ the minimal set of generators of $S$. Consider the monoid $S_{i}=\{\lambda \in \mathbb{N}\mid \lambda\mathbf{g}_{i}\in S\}\subseteq \mathbb{N}$ and let $\{\lambda_{1}^{(i)}, \lambda_{2}^{(i)},\ldots,\lambda_{s_{i}}^{(i)}\}$ be the minimal set of generators of $S_i$.
Then $\{\lambda_{1}^{(i)}\mathbf{g}_{i}, \lambda_{2}^{(i)}\mathbf{g}_{i},\ldots,\lambda_{s_{i}}^{(i)}\mathbf{g}_{i}\}\subseteq A$. 
\end{proposition}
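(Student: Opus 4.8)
The plan is to show that each element $\lambda_k^{(i)}\mathbf{g}_i$, for $k\in\{1,\ldots,s_i\}$, is a minimal generator of $S$; since $A$ is the set of minimal generators of $S$, this yields the asserted inclusion. Recall from the definition of minimal generators that an element of a submonoid of $\mathbb{N}^d$ lies in the minimal generating set precisely when it is nonzero and cannot be written as a sum of two nonzero elements of the monoid. Accordingly, I would fix $k$, note that $\lambda_k^{(i)}\mathbf{g}_i\in S$ by definition of $S_i$, and argue by contradiction: suppose $\lambda_k^{(i)}\mathbf{g}_i=\mathbf{a}+\mathbf{b}$ with $\mathbf{a},\mathbf{b}\in S\setminus\{\mathbf{0}\}$, aiming to contradict the minimality of $\lambda_k^{(i)}$ as a generator of $S_i$. (If $S_i=\{0\}$ the claimed set is empty and the statement is vacuous, so I may assume $s_i\ge 1$.)

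The geometric heart of the argument, and the step I expect to be the main obstacle, is to exploit the hypothesis that $\mathbb{Q}_+\mathbf{g}_i$ is an extreme ray of $\operatorname{cone}(\mathcal{C})$. Because $\mathbf{g}_i\notin\operatorname{cone}(\{\mathbf{g}_1,\ldots,\mathbf{g}_r\}\setminus\{\mathbf{g}_i\})$, the ray $\mathbb{Q}_+\mathbf{g}_i$ is a one-dimensional face of the polyhedral cone $\operatorname{cone}(\mathcal{C})$, so by standard cone duality (see \cite{bruns2009polytopes}) there is a supporting linear functional $\phi\colon\mathbb{R}^d\to\mathbb{R}$ with $\phi(\mathbf{x})\ge 0$ for all $\mathbf{x}\in\operatorname{cone}(\mathcal{C})$ and $\operatorname{cone}(\mathcal{C})\cap\phi^{-1}(0)=\mathbb{Q}_+\mathbf{g}_i$. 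Setting up this separation cleanly is the delicate point; once it is available, the rest is bookkeeping. Applying $\phi$ to $\lambda_k^{(i)}\mathbf{g}_i=\mathbf{a}+\mathbf{b}$ gives $0=\lambda_k^{(i)}\phi(\mathbf{g}_i)=\phi(\mathbf{a})+\phi(\mathbf{b})$ with both summands nonnegative, hence $\phi(\mathbf{a})=\phi(\mathbf{b})=0$, which forces $\mathbf{a},\mathbf{b}\in\operatorname{cone}(\mathcal{C})\cap\phi^{-1}(0)=\mathbb{Q}_+\mathbf{g}_i$.

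Finally I would convert this back to an arithmetic contradiction. Since $\mathbf{a},\mathbf{b}\in\mathbb{N}^d$ lie on the ray $\mathbb{Q}_+\mathbf{g}_i$ and the greatest common divisor of the coordinates of $\mathbf{g}_i$ is $1$ (so $\mathbf{g}_i$ is the primitive integer vector on that ray), a short divisibility argument — writing $\mathbf{a}=(u/v)\mathbf{g}_i$ in lowest terms and using $v\mid(g_i)_\ell$ for every coordinate $\ell$ — shows $\mathbf{a}=p\mathbf{g}_i$ and $\mathbf{b}=q\mathbf{g}_i$ for some $p,q\in\mathbb{N}$, with $p,q\ge 1$ because $\mathbf{a},\mathbf{b}\neq\mathbf{0}$. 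Then $p,q\in S_i$ (as $p\mathbf{g}_i,q\mathbf{g}_i\in S$) and $p+q=\lambda_k^{(i)}$, exhibiting $\lambda_k^{(i)}$ as a sum of two nonzero elements of $S_i$ and contradicting its membership in the minimal generating set of $S_i$. Hence no such decomposition exists, so $\lambda_k^{(i)}\mathbf{g}_i$ is a minimal generator of $S$, i.e. $\lambda_k^{(i)}\mathbf{g}_i\in A$, as claimed.
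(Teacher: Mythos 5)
Your proof is correct, but it routes the key steps differently from the paper, so a comparison is worthwhile. Both arguments share the same skeleton: force everything contributing to $\lambda_k^{(i)}\mathbf{g}_i$ onto the ray through $\mathbf{g}_i$, use the hypothesis that the coordinates of $\mathbf{g}_i$ have greatest common divisor $1$ to upgrade rational multiples of $\mathbf{g}_i$ to integer multiples, and contradict the minimality of $\lambda_k^{(i)}$ as a generator of $S_i$. The differences are two. First, you reduce to the characterization of $A$ as the non-decomposable elements of $S$ and suppose $\lambda_k^{(i)}\mathbf{g}_i=\mathbf{a}+\mathbf{b}$ with $\mathbf{a},\mathbf{b}\in S\setminus\{\mathbf{0}\}$; the paper instead takes a factorization $\lambda_\ell^{(i)}\mathbf{g}_i=\sum_{\mathbf{a}\in A}\mu_{\mathbf{a}}\mathbf{a}$ with $\sum_{\mathbf{a}}\mu_{\mathbf{a}}>1$ and needs an extra round of bookkeeping (the step forcing a coefficient $\alpha_\ell$ to vanish) before reaching the contradiction in $S_i$. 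Your two-summand version makes the endgame immediate, since $\lambda_k^{(i)}=p+q$ with $p,q\in S_i\setminus\{0\}$ directly violates minimality; this is genuinely cleaner. Second, for the geometric heart you invoke cone duality: the hypothesis makes $\mathbb{Q}_+\mathbf{g}_i$ an extreme ray, hence an exposed face, so a supporting functional $\phi$ exists and kills both summands. The paper avoids duality entirely and proves the ray-isolation property by hand: if $\lambda_\ell^{(i)}\mathbf{g}_i=\sum_{j=1}^{r}\beta_j\mathbf{g}_j$ with $\beta_j\in\mathbb{Q}_{\geq 0}$ and $\beta_k\neq 0$ for some $k\neq i$, then, since all vectors lie in the nonnegative orthant, $\beta_i<\lambda_\ell^{(i)}$, and rescaling $(\lambda_\ell^{(i)}-\beta_i)\mathbf{g}_i=\sum_{j\neq i}\beta_j\mathbf{g}_j$ exhibits $\mathbf{g}_i\in\operatorname{cone}(\{\mathbf{g}_1,\ldots,\mathbf{g}_r\}\setminus\{\mathbf{g}_i\})$, a contradiction. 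What each approach buys: the paper's treatment is elementary and self-contained, while yours is shorter and more conceptual but imports standard facts that you should cite precisely, namely that a pointed polyhedral cone is generated by its extreme rays, that every extreme ray of $\operatorname{cone}(\{\mathbf{g}_1,\ldots,\mathbf{g}_r\})$ is spanned by some $\mathbf{g}_j$, and that faces of polyhedral cones are exposed. Indeed, your assertion that the hypothesis makes $\mathbb{Q}_+\mathbf{g}_i$ a one-dimensional face is exactly the combination of the first two facts (if the ray were not extreme, the cone would be generated by the remaining $\mathbf{g}_j$, putting $\mathbf{g}_i$ in the cone of the others), not a tautology; spelling this out or citing \cite{bruns2009polytopes} for it closes the only soft spot in your write-up. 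Note also that pointedness, which holds here because $\operatorname{cone}(\mathcal{C})\subseteq\mathbb{R}_{\geq 0}^{d}$, is essential for both routes: it underlies the structure theorems you use, and it is what the paper uses implicitly to deduce $\beta_i<\lambda_\ell^{(i)}$.
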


\begin{proof}
Let $\ell\in \{1,\ldots,s_i\}$. We want to prove that $\lambda_{\ell}^{(i)} \mathbf{g}_{i}\in A$. First of all, observe that if $\lambda_{\ell}^{(i)}\mathbf{g}_{i} =\sum_{j=1}^{r}\beta_{j}\mathbf{g}_{j}$ with $\beta_{j} \in \mathbb{Q}_{\geq 0}$, then $\beta_j=0$ for all $j\neq i$. In fact, if there exists $k\in \{1,\ldots,r\}\setminus \{i\}$ such that $\beta_k\neq 0$, then $\beta_i<\lambda_{\ell}^{(i)}$. In particular, $(\lambda_{\ell}^{(i)} -\beta_i)\mathbf{g}_i=\sum_{j=1,j\neq i}^{r}\beta_{j}\mathbf{g}_{j}$, from which we easily obtain that $\mathbf{g}_i\in \operatorname{cone}(\{\mathbf{g}_1,\ldots,\mathbf{g}_r\}\setminus \{\mathbf{g}_i\})$, a contradiction. So, the only possibility is $\beta_i=\lambda$ and $\beta_j=0$ for all $j\in \{1,\ldots,r\}\setminus \{i\}$.\\
\noindent We know that $\lambda_{\ell}^{(i)} \mathbf{g}_{i}\in \langle A \rangle=S$ and suppose that $\lambda_{\ell}^{(i)} \mathbf{g}_{i}\notin A$. Hence, $\lambda_{\ell}^{(i)} \mathbf{g}_{i}=\sum_{\mathbf{a}\in A}\mu_{\mathbf{a}}\mathbf{a}$ for some $\mu_{\mathbf{a}}\in \mathbb{N}$ such that $\sum_{\mathbf{a}\in A}\mu_\mathbf{a}> 1$. For every $\mathbf{a}\in A$, we have $\mathbf{a}\in \mathcal{C}$. In particular, $\mathbf{a}\in \operatorname{cone}(\{\mathbf{g}_1,\ldots,\mathbf{g}_r\})$. For $\mathbf{a}\in A$, assume $\mathbf{a}=\sum_{j=1}^r \gamma_{(\mathbf{a},j)} \mathbf{g}_j$ with $\gamma_{(\mathbf{a},j)}\in \mathbb{Q}_{\geq 0}$ for each $j\in \{1,\ldots,r\}$. As a consequence, $\lambda_{\ell}^{(i)} \mathbf{g}_{i}=\sum_{\mathbf{a}\in A}\mu_{\mathbf{a}}\mathbf{a}=\sum_{j=1}^r(\sum_{\mathbf{a}\in A}\mu_\mathbf{a} \gamma_{(\mathbf{a},j)}) \mathbf{g}_j$. By the discussion at the beginning of the proof, we have $\sum_{\mathbf{a}\in A}\mu_\mathbf{a} \gamma_{(\mathbf{a},j)}=0$ for all $j\in \{1,\ldots,r\}\setminus \{i\}$. So, for all $j\in \{1,\ldots,r\}\setminus \{i\}$, since $\mu_\mathbf{a} \gamma_{(\mathbf{a},j)}\geq 0$, the only possibility is $\mu_\mathbf{a} \gamma_{(\mathbf{a},j)}= 0$. In particular, if $\mu_\mathbf{a}\neq 0$ we have $\gamma_{(\mathbf{a},j)}= 0$ for all $j\in \{1,\ldots,r\}\setminus \{i\}$. 
Then, for all $\mathbf{a}\in A$ with $\mu_\mathbf{a}\neq 0$, we have $\mathbf{a}=\gamma_{(\mathbf{a},i)} \mathbf{g}_i$ for some $ \gamma_{(\mathbf{a},i)}\in \mathbb{Q}_{\geq 0}\setminus \{0\}$. Since $\mathbf{g}_i,\mathbf{a}\in \mathbb{N}^d$, the hypothesis that the greatest common divisor of the coordinates of $\mathbf{g}_i$ is $1$ forces $ \gamma_{(\mathbf{a},i)}\in \mathbb{N}\setminus \{0\}$. Therefore, for all $\mathbf{a}\in A$ with $\mu_\mathbf{a}\neq 0$, we have $\gamma_{(\mathbf{a},i)}\in S_i= \langle \lambda_{1}^{(i)}, \lambda_{2}^{(i)} ,\ldots,\lambda_{s_{i}}^{(i)} \rangle$. Let $\mathbf{b}\in A$ such that $\mu_\mathbf{b}\neq 0$, and assume  $\gamma_{(\mathbf{b},i)}=\sum_{j=1}^{s_i}\alpha_j \lambda_{j}^{(i)}$, $\alpha_j\in \mathbb{N}$. Then we can write $\lambda_{\ell}^{(i)} \mathbf{g}_i=\sum_{\mathbf{a}\in A, \mathbf{a}\neq \mathbf{b}}\mu_\mathbf{a}\mathbf{a}+\mu_\mathbf{b}\left(\sum_{j=1,j\neq \ell}^{s_i}\alpha_j \lambda_{j}^{(i)}\mathbf{g}_i\right)+ \mu_\mathbf{b}\alpha_\ell \lambda_{\ell}^{(i)}\mathbf{g}_i$. Since $\mu_\mathbf{b}> 0$ and $\sum_{\mathbf{a}\in A}\mu_\mathbf{a}>1$, the only possibility is $\alpha_\ell=0$. In particular, we can argue that for all $\mathbf{a}\in A$, with $\mu_\mathbf{a}\neq 0$, $\gamma_{(\mathbf{a},i)}\in \langle \{\lambda_{1}^{(i)}, \lambda_{2}^{(i)} ,\ldots,\lambda_{s_{i}}^{(i)}\}\setminus \{\lambda_{\ell}^{(i)}\} \rangle $. As a consequence, $\lambda_{\ell}^{(i)} \mathbf{g}_i\in \langle \{\gamma_{(\mathbf{a},i)} \mathbf{g}_i\mid \mathbf{a}\in A, \mu_\mathbf{a}\neq 0\} \rangle\subseteq \langle \{\lambda_{1}^{(i)}\mathbf{g}_{i}, \lambda_{2}^{(i)} ,\ldots,\lambda_{s_{i}}^{(i)}\mathbf{g}_{i}\}\setminus \{\lambda_{\ell}^{(i)} \mathbf{g}_i\}\rangle$. Hence, $\lambda_{\ell}^{(i)} \in \langle \{\lambda_{1}^{(i)}\, \lambda_{2}^{(i)} ,\ldots,\lambda_{s_{i}}^{(i)}\}\setminus \{\lambda_{\ell}^{(i)}\}\rangle$, but this contradicts the fact that $S_i$ is minimally generated by $\{\lambda_{1}^{(i)}\mathbf{g}_{i}, \lambda_{2}^{(i)}\mathbf{g}_i ,\ldots,\lambda_{s_{i}}^{(i)}\mathbf{g}_{i}\}$. So we can conclude $\lambda_{\ell}^{(i)}\mathbf{g}_i\in A$.
\end{proof}

\medskip 
\noindent \textbf{Condition (2)}. Firstly observe that, in the outlined framework, for the second condition of Theorem~\ref{Csem} we can point out that: if $\mathbf{g}_i\in S$ for some $i\in \{1,\ldots,n\}$, for what concerns the elements of the form $\mathbf{g}_i+n_i^{(j)}\mathbf{g}_j$ we can consider $n_i^{(j)}=0$ for all $j\in \{1,\ldots,n\}\setminus\{i\}$. So, assume $\mathbf{g}_i\notin S$. 
For $j\in \{1,\ldots,n\}\setminus \{i\}$, consider that:
$$\{k\in \mathbb{N}\mid \textbf{g}_{i}+k\textbf{g}_{j}\in S\}= \left \lbrace k\in \mathbb{N}\mid \mathbf{g}_i=\sum_{j=1}^t\lambda_j\mathbf{n}_j+k (-\textbf{g}_{j}) ,\text{ for some }\lambda_1,\ldots,\lambda_n \in \mathbb{N}\right\rbrace.$$  
So, in this case, we need to find all non-negative integer factorizations of $\mathbf{g}_i$ in the monoid $\langle \mathbf{n}_1,\ldots, \mathbf{n}_t,-\mathbf{g}_j \rangle \subseteq \mathbb{Z}^{t+1}$ and take, for each factorization, the coefficient of $-\mathbf{g}_j$.  In particular, this problem is related to find the non-negative integer solutions of a non homogeneus diophantine linear system of equations, that is, using the notation introduced in Condition (1), we have:
$$\{k\in \mathbb{N}\mid \textbf{g}_{i}+k\textbf{g}_{j}\in S\}=\{x_{t+1}\mid A^{(j)}\textbf{x}=\mathbf{g}_i \ \ \mbox{with}\ \mathbf{x}=(x_1,\ldots,x_t,x_{t+1})\in \ \mathbb{N}^{t+1}\}.$$  
Recall that, if $R_{ij}$ is the set of non-negative integer solutions of $A^{(j)}\textbf{x}=\mathbf{g}_i$ and $V_{ij}$ is the set of minimal elements of $R_{ij}$, with respect to the natural partial order in $\mathbb{N}^{t+1}$, then the set $V_{ij}$ is finite and $R_{ij}=\bigcup_{\mathbf{y}\in V_{ij}}(\mathbf{y}+M_j)$,  where $M_j$ is the set of non-negative integer solutions of the homogenenus diophantine linear system $A^{(j)}\mathbf{x}=0$ (see for instance \cite[Theorem 5.2]{pison2004nsolutions}). In particular, $\min \{k\in \mathbb{N}\mid \textbf{g}_{i}+k\textbf{g}_{j}\in S\}=\min\{x_{t+1}\mid (x_1,\ldots,x_t,x_{t+1})\in V_{ij}\}$. The following example shows how to perform such a computation with \texttt{GAP}, using \texttt{numericalsgps} and \texttt{NormalizInterface}.

\begin{example}\label{exa:C2} \rm
Consider the semigroups $\mathcal{C}$ and $S$ as in Example~\ref{example2}. We focus on the element such as $\mathbf{g}_1+n_{1}^{(3)}\mathbf{g}_3$. In order to compute 
$\{k\in \mathbb{N}\mid \textbf{g}_{1}+k\textbf{g}_{3}\in S\}$, with $\mathbf{g}_1=(1,1)$ and $\mathbf{g}_3=(2,1)$, we need to find the minimal factorizations of $(1,1)$ with respect to the set $\{(1,2),(2,1),(2,2),(3,1),(3,5),(-2,-1)\}$. That is, we need to find the set $V_{13}$ introduced before.  
\begin{verbatim}
gap> LoadPackage("num");;
gap> NumSgpsUseNormaliz();;
gap> A:=[[1,2],[2,1],[2,2],[3,1],[3,5],[-2,-1]];;
gap> n:=Length(A);;
gap> F:=FactorizationsVectorWRTList([1,1], A);
[ [ 0, 0, 0, 6, 1, 10 ], [ 0, 0, 1, 1, 0, 2 ], [ 1, 0, 0, 2, 0, 3 ] ]
gap> List(F,i->i[n]);
[ 10, 2, 3 ]
\end{verbatim}
The computations above show that $V_{13}=\{(0,0,0,6,1,10),(0,0,1,1,0,2),(1,0,0,2,0,3)\}$. The package manual of \texttt{numericalagps} explains that, if \texttt{v} is a list of non-negative integers and \texttt{ls} is a list of lists of non-negative integers, then the function \texttt{FactorizationsVectorWRTList( v, ls )} returns the set of factorizations of \texttt{v} in terms of the elements of \texttt{ls}. Actually, when \texttt{NormalizInterface} is used, that function works also in the case \texttt{ls} has vectors with negative coordinates. In fact, by the code of that function, using \texttt{NormalizInterface}, the function computes exactly the minimal elements (with respect to the natural partial order) of the set of  non-negative integer solutions of the system \texttt{ls}*\texttt{x}=\texttt{v}, in the case the system admits solutions (each list of integers is considered here as a column vector).
So, in this case, $\min \{k\in \mathbb{N}\mid \textbf{g}_{1}+k\textbf{g}_{3}\in S\}=2$ and we can consider $n_1^{(3)}=2$, that is, $\mathbf{g}_1+2\mathbf{g}_3\in S$.
\end{example}

Now, we can describe a procedure to check if $\mathcal{C}\setminus S$ is finite and, if so, to compute its elements. By Lemma~\ref{proje}, a direct way is to focus on the monoid $f_\mathcal{C}^{-1}(S)$. In particular, we start by finding a set of generators of $f_\mathcal{C}^{-1}(S)$. \\
\noindent Assume $\mathcal{C}=\langle  \mathbf{g}_{1},\mathbf{g}_{2},\ldots,\mathbf{g}_{n}\rangle$ and $S=\langle \mathbf{n}_{1},\mathbf{n}_{2},\ldots,\mathbf{n}_{t} \rangle $. Observe that an element of the form $\sum_{i=1}^n x_i \mathbf{e}_i$ belongs to $f_\mathcal{C}^{-1}(S)$ if and only if there exist $\lambda_1,\ldots,\lambda_t \in \mathbb{N}$ such that $\sum_{i=1}^n x_i \mathbf{g}_i=\sum_{j=1}^t \lambda_j \mathbf{n}_j$. Consider the matrix $A^{(\mathcal{C},S)}=[\mathbf{g}_{1},\mathbf{g}_{2},\ldots,\mathbf{g}_{n}, - \mathbf{n}_{1},-\mathbf{n}_{2},\ldots,-\mathbf{n}_{t}]$, obtained identifying every integer vector with a column vector. Then, the elements of $f_\mathcal{C}^{-1}(S)$ can be obtained from the non-negative integer solutions of the homogeneous diophantine linear system $A^{(\mathcal{C},S)}\mathbf{x}=\mathbf{0}$, that is:
$$ f_\mathcal{C}^{-1}(S)=\{(x_1,\ldots,x_n)\in \mathbb{N}^d \mid A^{(\mathcal{C},S)}\mathbf{x}=\mathbf{0}\text{ with }\mathbf{x}=(x_1,\ldots,x_n,x_{n+1},\ldots,x_{n+t})\in \mathbb{N}^{n+t}\}.$$
Let $M_{(\mathcal{C},S)}\subseteq \mathbb{N}^{n+t}$ be the set of non-negative integer solutions of the homogeneous diophantine linear system $A^{(\mathcal{C},S)}\mathbf{x}=0$. The set $M_{(\mathcal{C},S)}$ is an affine semigroup in $\mathbb{N}^{n+t}$ (see \cite[Section 1]{rosales1998non-negative}). So, there exists a finite set $B_{(\mathcal{C},S)}\subseteq \mathbb{N}^{n+t}$, such that $M_{(\mathcal{C},S)}=\langle B_{(\mathcal{C},S)} \rangle$. Hence, $f_\mathcal{C}^{-1}(S)=\langle \{(x_1,\ldots, x_{n})\in \mathbb{N}^n\mid (x_1,\ldots,x_n,x_{n+1},\ldots,x_{n+t})\in B_{(\mathcal{C},S)}\}\rangle $. These computations can be performed using the computer algebra software \texttt{GAP}  with the packages \texttt{numericalsgps} and \texttt{NormalizInterface}. \\
\noindent Now, by Lemma~\ref{proje}, we need to check if $\mathbb{N}^n\setminus f_\mathcal{C}^{-1}(S)$ is finite and, if so, compute its elements. By the previous arguments we obtained a finite set $B\subseteq \mathbb{N}^{n}$ such that $f_\mathcal{C}^{-1}(S)=\langle B \rangle$. So, we can test if $\mathbb{N}^n\setminus f_\mathcal{C}^{-1}(S)$ is finite by Theorem~\ref{GNS}. Once we check it is finite, in order to compute the set $\mathbb{N}^n\setminus f_\mathcal{C}^{-1}(S)$, we can consider the procedure described in \cite{cisto2021algorithms} (it can be performed using \texttt{GAP}  with the package \texttt{numericalsgps}). Finally, we obtain the set $\mathcal{C}\setminus S$ considering all elements $f_\mathcal{C}(\mathbf{h})$, for every $\mathbf{h}\in \mathbb{N}^n \setminus f_\mathcal{C}^{-1}(S)$.
\begin{Algorithm}\label{alg:CminusS2}
Let $\mathcal{C}=\langle \mathbf{g}_{1},\mathbf{g}_{2},\ldots,\mathbf{g}_{n}\rangle \subseteq \mathbb{N}^{d}$ be an affine semigroup and $S$ a submonoid of $\mathcal{C}$. Suppose that $S=\langle \mathbf{n}_{1},\mathbf{n}_{2},\ldots, \mathbf{n}_{t}\rangle$. In order to compute $\mathcal{C}\setminus S$ we can consider the following steps:
\begin{enumerate}
\item Consider the matrix $A^{(\mathcal{C},S)}=[\mathbf{g}_{1},\mathbf{g}_{2},\ldots,\mathbf{g}_{n}, - \mathbf{n}_{1},-\mathbf{n}_{2},\ldots,-\mathbf{n}_{t}]$, where each element is identified as a column vector.
\item Compute a finite set $B_{(\mathcal{C},S)}\subseteq \mathbb{N}^{n+t}$, such that $\langle B_{(\mathcal{C},S)} \rangle$ is the set of non-negative integer solutions of the homogeneous diophantine linear system $A^{(\mathcal{C},S)}\mathbf{x}=0$.
\item Set $B=\{(x_1,\ldots, x_{n})\in \mathbb{N}^n\mid (x_1,\ldots,x_n,x_{n+1},\ldots,x_{n+t})\in B_{(\mathcal{C},S)}\}$.
\item Check if the set $B$ satisfies the conditions of Theorem~\ref{GNS}, that is, check if $\langle B \rangle$ is $\mathbb{N}^n$-cofinite.
\item If $\langle B \rangle$ is not $\mathbb{N}^n$-cofinite, then $S$ is not $\mathcal{C}$-cofinite.
\item If $\langle B \rangle$ is $\mathbb{N}^n$-cofinite, compute $H=\mathbb{N}^n\setminus \langle B \rangle$. 
\item Compute $\mathcal{C}\setminus S=\{f_\mathcal{C}(\mathbf{h})\mid \mathbf{h}\in H\}$.
\end{enumerate}
\end{Algorithm}

\begin{example}\label{example5} \rm
Consider the affine semigroups $\mathcal{C}=\langle (1,1),(1,2),(2,1),(3,1)\rangle$ and  $S=\mathcal{C}\setminus \{(1,1),(3,2),(2,3)\}=\langle (1,2),(2,1),(2,2),(3,1),(3,5)\rangle$ (as in the previous examples). In the following, we show how it is possible to perform Algorithm~\ref{alg:CminusS2} using the computer algebra software \texttt{GAP}, with the packages \texttt{numericalsgps} and \texttt{NormalizInterface}.
\begin{verbatim}
gap> LoadPackage("num");;
gap> NumSgpsUseNormaliz();;
gap> C:=[[1,1],[1,2],[2,1],[3,1]];
gap> n:=Length(C);
4
[ [ 1, 1 ], [ 1, 2 ], [ 2, 1 ], [ 3, 1 ] ]
gap> S:=[[1,2],[2,1],[2,2],[3,1],[3,5]];
[ [ 1, 2 ], [ 2, 1 ], [ 2, 2 ], [ 3, 1 ], [ 3, 5 ] ]
gap> A:=Concatenation(C,-S);
[ [ 1, 1 ], [ 1, 2 ], [ 2, 1 ], [ 3, 1 ], [ -1, -2 ], [ -2, -1 ], [ -2, -2 ], 
  [ -3, -1 ], [ -3, -5 ] ]
gap> A:=TransposedMat(A);
[ [ 1, 1, 2, 3, -1, -2, -2, -3, -3 ], [ 1, 2, 1, 1, -2, -1, -2, -1, -5 ] ]
gap> gap> sol:=HilbertBasisOfSystemOfHomogeneousEquations(A,[]);
[ [ 0, 0, 0, 1, 0, 0, 0, 1, 0 ], [ 0, 0, 1, 0, 0, 1, 0, 0, 0 ], 
  [ 0, 0, 4, 0, 0, 0, 1, 2, 0 ], [ 0, 0, 5, 0, 1, 0, 0, 3, 0 ], 
  [ 0, 0, 12, 0, 0, 0, 0, 7, 1 ], [ 0, 1, 0, 0, 1, 0, 0, 0, 0 ], 
  [ 0, 1, 0, 1, 0, 1, 1, 0, 0 ], [ 0, 1, 0, 3, 0, 5, 0, 0, 0 ], 
  [ 0, 1, 3, 0, 0, 0, 2, 1, 0 ], [ 0, 1, 7, 0, 0, 0, 0, 4, 1 ], 
  [ 0, 2, 2, 0, 0, 0, 0, 1, 1 ], [ 0, 2, 2, 0, 0, 0, 3, 0, 0 ], 
  [ 0, 3, 0, 2, 0, 3, 0, 0, 1 ], [ 0, 3, 1, 0, 0, 0, 1, 0, 1 ], 
  [ 0, 3, 1, 1, 0, 0, 4, 0, 0 ], [ 0, 4, 0, 1, 0, 0, 2, 0, 1 ], 
  [ 0, 4, 0, 2, 0, 0, 5, 0, 0 ], [ 0, 5, 0, 1, 0, 1, 0, 0, 2 ], 
  [ 0, 7, 1, 0, 0, 0, 0, 0, 3 ], [ 0, 8, 0, 1, 0, 0, 1, 0, 3 ], 
  [ 0, 12, 0, 1, 0, 0, 0, 0, 5 ], [ 1, 0, 0, 1, 0, 2, 0, 0, 0 ], 
  [ 1, 0, 2, 0, 0, 0, 1, 1, 0 ], [ 1, 0, 3, 0, 1, 0, 0, 2, 0 ], 
  [ 1, 0, 10, 0, 0, 0, 0, 6, 1 ], [ 1, 1, 1, 0, 0, 0, 2, 0, 0 ], 
  [ 1, 1, 5, 0, 0, 0, 0, 3, 1 ], [ 1, 2, 0, 0, 0, 0, 0, 0, 1 ], 
  [ 1, 2, 0, 1, 0, 0, 3, 0, 0 ], [ 2, 0, 0, 0, 0, 0, 1, 0, 0 ], 
  [ 2, 0, 1, 0, 1, 0, 0, 1, 0 ], [ 2, 0, 8, 0, 0, 0, 0, 5, 1 ], 
  [ 2, 1, 3, 0, 0, 0, 0, 2, 1 ], [ 3, 0, 0, 0, 1, 1, 0, 0, 0 ], 
  [ 3, 0, 6, 0, 0, 0, 0, 4, 1 ], [ 3, 1, 1, 0, 0, 0, 0, 1, 1 ], 
  [ 4, 0, 4, 0, 0, 0, 0, 3, 1 ], [ 4, 1, 0, 0, 0, 1, 0, 0, 1 ], 
  [ 5, 0, 0, 0, 2, 0, 0, 1, 0 ], [ 5, 0, 2, 0, 0, 0, 0, 2, 1 ], 
  [ 6, 0, 0, 0, 0, 0, 0, 1, 1 ], [ 7, 0, 0, 0, 0, 2, 0, 0, 1 ] ]
gap> B:=List(sol,i->i{[1..n]});
[ [ 0, 0, 0, 1 ], [ 0, 0, 1, 0 ], [ 0, 0, 4, 0 ], [ 0, 0, 5, 0 ], 
  [ 0, 0, 12, 0 ], [ 0, 1, 0, 0 ], [ 0, 1, 0, 1 ], [ 0, 1, 0, 3 ], 
  [ 0, 1, 3, 0 ], [ 0, 1, 7, 0 ], [ 0, 2, 2, 0 ], [ 0, 2, 2, 0 ], 
  [ 0, 3, 0, 2 ], [ 0, 3, 1, 0 ], [ 0, 3, 1, 1 ], [ 0, 4, 0, 1 ], 
  [ 0, 4, 0, 2 ], [ 0, 5, 0, 1 ], [ 0, 7, 1, 0 ], [ 0, 8, 0, 1 ], 
  [ 0, 12, 0, 1 ], [ 1, 0, 0, 1 ], [ 1, 0, 2, 0 ], [ 1, 0, 3, 0 ], 
  [ 1, 0, 10, 0 ], [ 1, 1, 1, 0 ], [ 1, 1, 5, 0 ], [ 1, 2, 0, 0 ], 
  [ 1, 2, 0, 1 ], [ 2, 0, 0, 0 ], [ 2, 0, 1, 0 ], [ 2, 0, 8, 0 ], 
  [ 2, 1, 3, 0 ], [ 3, 0, 0, 0 ], [ 3, 0, 6, 0 ], [ 3, 1, 1, 0 ], 
  [ 4, 0, 4, 0 ], [ 4, 1, 0, 0 ], [ 5, 0, 0, 0 ], [ 5, 0, 2, 0 ], 
  [ 6, 0, 0, 0 ], [ 7, 0, 0, 0 ] ]
gap> T:=AffineSemigroup(B);
<Affine semigroup in 4 dimensional space, with 41 generators>
gap> MinimalGenerators(T);
[ [ 0, 0, 0, 1 ], [ 0, 0, 1, 0 ], [ 0, 1, 0, 0 ], [ 1, 0, 0, 1 ], 
  [ 1, 0, 2, 0 ], [ 1, 1, 1, 0 ], [ 1, 2, 0, 0 ], [ 2, 0, 0, 0 ], 
  [ 3, 0, 0, 0 ] ]
gap> H:=Gaps(T);
[ [ 1, 0, 0, 0 ], [ 1, 0, 1, 0 ], [ 1, 1, 0, 0 ] ]
gap> Set(List(H,i->i*C));
[ [ 1, 1 ], [ 2, 3 ], [ 3, 2 ] ]
\end{verbatim} 
Therefore, the previous computations show that $B_{(\mathcal{C},S)}$=\{(0,0,0,1),(0,0,1,0),(0,1,0,0),(1,0,0,1), (1,0,2,0),(1,1,1,0),(1,2,0,0),(2,0,0,0),(3,0,0,0)\}, $\mathbb{N}^n \setminus f_\mathcal{C}^{-1}(S)=\{(1,0,0,0),(1,0,1,0),(1,1,0,0)\}$ and $\mathcal{C}\setminus S=\{(1,1),(2,3),(3,2)\}$. 
\end{example}

\section{Cofinite ideals of an affine semigroup}\label{sec2}

Let $S\subseteq \mathbb{N}^{d}$ be an affine semigroup. A set $I$ is an \emph{ideal} of $S$ if $I\subseteq S$ and $I+S\subseteq I$. Every ideal $I$ can be expressed as $I=X+S$, where $X=\operatorname{Minimals}_{\leq_S}(I)$ and $\leq_S$ is the partial order in $\mathbb{N}^d$ defined by $\mathbf{x}\leq_S \mathbf{y}$ if $\mathbf{y}-\mathbf{x}\in S$. In particular, the set $X$ is called a set of generators of $I$. Furthermore the set $X$ is finite (see for instance \cite[Proposition 2.7.4]{g-hk}), that is, every ideal of an affine semigroup is \emph{finitely generated}. We simply denote by $\leq$ the order $\leq_{\mathbb{N}^n}$, that is, the natural partial order in $\mathbb{N}^n$, $n$ any positive integer.

\noindent Theorem~\ref{Csem} can be used to characterize when $S\setminus I$ is finite, obtaining the following result.

\begin{theorem}\label{thm:cofinite-ideal}
Let $S\subseteq \mathbb{N}^d$ be an affine semigroup generated by $\{\mathbf{g}_1,\ldots,\mathbf{g}_n\}$ and $I$ be an ideal of $S$. Then $S\setminus I$ is a finite set if and only if for all $i\in \{1,\ldots,n\}$ there exists $k_i\in \mathbb{N}\setminus\{0\}$ such that $k_i\mathbf{g}_i\in I$.
\end{theorem}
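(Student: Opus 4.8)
The plan is to reduce the statement to Theorem~\ref{Csem} applied with $\mathcal{C}=S$. The first step is to turn the ideal $I$ into a submonoid: I would set $S'=I\cup\{\mathbf{0}\}$ and check that $S'$ is a submonoid of $S$. Indeed $\mathbf{0}\in S'$, and for $\mathbf{x},\mathbf{y}\in I$ the ideal property $I+S\subseteq I$ gives $\mathbf{x}+\mathbf{y}\in I$ (since $\mathbf{y}\in I\subseteq S$), so $S'$ is closed under addition. Because $S'$ and $I$ differ by at most the element $\mathbf{0}$, the set $S\setminus I$ is finite if and only if $S\setminus S'=\mathcal{C}\setminus S'$ is finite, i.e.\ if and only if $S'$ is $\mathcal{C}$-cofinite. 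Thus it suffices to translate the two conditions of Theorem~\ref{Csem} for the submonoid $S'\subseteq\mathcal{C}=S$ into the single condition in the statement. I would also assume, without loss of generality, that each $\mathbf{g}_i\neq\mathbf{0}$, since a zero generator is redundant.

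The core of the argument is a careful use of the ideal property. Suppose that for every $i$ there is $k_i\in\mathbb{N}\setminus\{0\}$ with $k_i\mathbf{g}_i\in I$. For condition (1) of Theorem~\ref{Csem}, fix $i$ and consider $S'_i=\{\lambda\in\mathbb{N}\mid \lambda\mathbf{g}_i\in S'\}$. For every $\lambda\geq k_i$ I can write $\lambda\mathbf{g}_i=k_i\mathbf{g}_i+(\lambda-k_i)\mathbf{g}_i\in I+S\subseteq I$, so $S'_i$ contains all integers $\geq k_i$; hence $\mathbb{N}\setminus S'_i$ is finite and $S'_i$ is a numerical semigroup. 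For condition (2), given $i\neq j$ I use $k_j\mathbf{g}_j\in I$ to obtain $\mathbf{g}_i+k_j\mathbf{g}_j\in I+S\subseteq I\subseteq S'$, so $n_i^{(j)}=k_j$ works. Hence Theorem~\ref{Csem} applies, $\mathcal{C}\setminus S'$ is finite, and therefore $S\setminus I$ is finite.

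For the converse, assume $S\setminus I$ is finite, equivalently that $S'$ is $\mathcal{C}$-cofinite. By Theorem~\ref{Csem}(1), each $S'_i$ is a numerical semigroup, so it contains some $k_i>0$. Then $k_i\mathbf{g}_i\in S'=I\cup\{\mathbf{0}\}$, and since $k_i>0$ and $\mathbf{g}_i\neq\mathbf{0}$ force $k_i\mathbf{g}_i\neq\mathbf{0}$, we conclude $k_i\mathbf{g}_i\in I$. This establishes both implications. (I note that the converse can also be argued directly, without Theorem~\ref{Csem}, by observing that the infinitely many distinct multiples $\lambda\mathbf{g}_i$ cannot all lie in the finite set $S\setminus I$.)

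The main obstacle I expect is making the passage between $I$ and the submonoid $S'=I\cup\{\mathbf{0}\}$ clean: confirming that $S'$ is a submonoid directly from $I+S\subseteq I$, keeping track of the harmless element $\mathbf{0}$, and ensuring that each generator is nonzero so that a positive multiple $k_i\mathbf{g}_i$ is genuinely distinct from $\mathbf{0}$ and hence lands in $I$ rather than being the spurious element. The remaining steps are routine once the ideal property is exploited to propagate a single multiple $k_i\mathbf{g}_i$ to all larger multiples and to the cross terms $\mathbf{g}_i+k_j\mathbf{g}_j$.
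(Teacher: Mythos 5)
Your proposal is correct and takes essentially the same route as the paper's own proof: both apply Theorem~\ref{Csem} with $\mathcal{C}=S$ to the submonoid $I\cup\{\mathbf{0}\}$, verify condition (1) by propagating $k_i\mathbf{g}_i\in I$ to all larger multiples and condition (2) via the ideal property, and read off the converse from condition (1). Your treatment is in fact slightly more careful than the paper's on the bookkeeping points (distinguishing $I$ from $I\cup\{\mathbf{0}\}$ and noting $\mathbf{g}_i\neq\mathbf{0}$ so that $k_i\mathbf{g}_i\neq\mathbf{0}$), but the argument is the same.
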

\begin{proof}

Observe that $I\cup \{\mathbf{0}\}$ is a submonoid of $S$, so we can use Theorem~\ref{Csem}.

\emph{Necessity.} If $S\setminus I$ is finite, from condition $(1)$ of Theorem~\ref{Csem} we have that the set $\{\lambda\in \mathbb{N}\mid \lambda \mathbf{g}_i\in I\}$ is a numerical semigroup for all $i\in \{1,\ldots,n\}$, in particular this set contains nonzero integers. So, for all $i\in \{1,\ldots,n\}$ there exists $k_i\in \mathbb{N}\setminus \{0\}$ such that $k_i\mathbf{g}_i \in I$.

\emph{Sufficiency.}  Suppose that for all $i\in \{1,\ldots,n\}$ there exists $k_i\in \mathbb{N}\setminus \{0\}$ such that $k_i\mathbf{g}_i \in I$. It suffices to check that $I\cup \{\mathbf{0}\}$ satisfies both the conditions of Theorem~\ref{Csem}. Since $k_i\mathbf{g}_i \in I$ we obtain  $k_i\mathbf{g}_i +\lambda \mathbf{g}_i\in I$ for all $\lambda\in \mathbb{N}$, in particular $\{\lambda\in \mathbb{N}\mid \lambda \mathbf{g}_i\in I\}\supseteq \mathbb{N}\setminus \{1,\ldots,k_i-1\}$, that is, the first condition is satisfied. The second condition holds trivially by the definition of ideal, since for all $i,j\in \{1,\ldots,n\}$, with $i\neq j$, we have $\mathbf{g}_i+k_j\mathbf{g}_j \in I$.
\end{proof}

\begin{Algorithm}\label{alg:2}
Let $S\subseteq \mathbb{N}^d$ be an affine semigroup generated by $\{\mathbf{g}_1,\ldots,\mathbf{g}_n\}$ and $I$ be an ideal of $S$, such that $S\setminus I$ is a finite set. Then, it is possible to compute the set $S\setminus I$ by the following two steps:
\begin{enumerate}
\item For all $i\in \{1,\ldots,n\}$ compute $k_i=\min \{k\in\mathbb{N}\mid k\mathbf{g}_i\in I\}$. If for some $i\in\{1,\ldots,n\}$ we have $\{k\in\mathbb{N}\mid k\mathbf{g}_i\in I\}=\emptyset$, then $S\setminus I$ is not finite.
\item Let $P=\{(\lambda_1,\ldots,\lambda_n)\in \mathbb{N}^n \mid \lambda_i<k_i\ \mbox{for all}\ i\in \{1,\ldots,n\}\}$ (observe that $P$ is a finite set). Then $$S\setminus I=\left\lbrace\sum_{i=1}^n \lambda_i \mathbf{g}_i \notin I\mid (\lambda_1,\ldots,\lambda_n)\in P\right \rbrace.$$
\end{enumerate}
\end{Algorithm}

\begin{remark}
Observe that for step (1) of Algorithm~\ref{alg:2} it would suffice to find $k_i\in \{k\in\mathbb{N}\mid k\mathbf{g}_i\in I\}$, not necessarily being the minimum. But it is better to find $k_i$ as the minimum in order to have $P$ with smallest possible cardinality.
\end{remark}

\noindent Let $S\subseteq \mathbb{N}^d$ be an affine semigroup generated by $\{\mathbf{g}_1,\ldots,\mathbf{g}_n\}$, $I$ be an ideal of $S$ and assume $I=X+S$ with $X=\{\mathbf{u}_1,\ldots,\mathbf{u}_r\}\subset S$. For $i\in \{1\,\ldots,n\}$, in order to compute the integer $k_i=\min \{k\in\mathbb{N}\mid k\mathbf{g}_i\in I\}$, or to test if it exists, consider that: for $k\in \mathbb{N}$, then $k\mathbf{g}_i\in I$ if and only if there exists $j\in \{1,\ldots,r\}$ such that $k\mathbf{g}_i=x_j+\sum_{l=1}^n\lambda_l \mathbf{g}_l$ with $\lambda_l\in \mathbb{N}$ for each $l\in \{1,\ldots,n\}$. Equivalently, $-\mathbf{u}_j=\sum_{l=1,l\neq i}\lambda_l \mathbf{g}_l+k(-\mathbf{g}_i)$. So, in this case, we need to find all non-negative integer factorizations of $-\mathbf{u}_j$ in the monoid $\langle \{\mathbf{g}_1,\ldots, \mathbf{g}_n\}\setminus \{\mathbf{g}_i\}\cup \{-\mathbf{g}_i\} \rangle \subseteq \mathbb{Z}^{n}$ and take, for each factorization, the coefficient of $-\mathbf{g}_i$. This computation can be performed by \texttt{GAP} as in Example~\ref{exa:C2}. Moreover, the package \texttt{numericalsgps} also contains many routines to deal with ideals of an affine semigroup. For instance, it is possible to test if an integer vector belongs to an ideal or not.

\medskip
To test if $S\setminus I$ is finite and, if so, to compute it, we can also consider the map $f_S: \mathbb{N}^n \longrightarrow S$ introduced in Section~\ref{sec1}, as explained in the next result.

\begin{proposition}\label{prop:E(I)}
Let $S=\langle \mathbf{g}_1,\ldots,\mathbf{g}_n \rangle \subseteq \mathbb{N}^d$ be an affine semigroup and $I$ an ideal of $S$. Then, the set $f_S^{-1}(I)$ is an ideal of $\mathbb{N}^n$ and the following hold:
\begin{enumerate}
\item $S\setminus I$ is finite if and only if $\mathbb{N}^n \setminus f_S^{-1}(I)$ is finite.
\item $S\setminus I=\{f_S(\mathbf{x})\mid \mathbf{x} \in \mathbb{N}^n \setminus f_S^{-1}(I)\}$. 
\end{enumerate}
\end{proposition}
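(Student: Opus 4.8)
The plan is to follow closely the argument of Lemma~\ref{proje}, since the map $f_S$ here is exactly the map $f_\mathcal{C}$ of Section~\ref{sec1} in the special case $\mathcal{C}=S$, and all that is used there is that $f_S$ is a surjective monoid homomorphism with finite fibres. The only genuinely new ingredient is the first assertion, that $f_S^{-1}(I)$ is an ideal of $\mathbb{N}^n$, which replaces the observation in Lemma~\ref{proje} that the preimage of a submonoid is a submonoid.

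First I would check that $f_S^{-1}(I)$ is an ideal. Take $\mathbf{x}\in f_S^{-1}(I)$ and $\mathbf{y}\in\mathbb{N}^n$. Since $f_S$ is a monoid homomorphism, $f_S(\mathbf{x}+\mathbf{y})=f_S(\mathbf{x})+f_S(\mathbf{y})$, and here $f_S(\mathbf{x})\in I$ while $f_S(\mathbf{y})\in S$; as $I$ is an ideal of $S$ we have $I+S\subseteq I$, so $f_S(\mathbf{x}+\mathbf{y})\in I$ and thus $\mathbf{x}+\mathbf{y}\in f_S^{-1}(I)$. This gives $f_S^{-1}(I)+\mathbb{N}^n\subseteq f_S^{-1}(I)$, so $f_S^{-1}(I)$ is an ideal of $\mathbb{N}^n$.

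Next, writing $J=f_S^{-1}(I)$, I would use surjectivity of $f_S$ to record the set-theoretic identity $\mathbb{N}^n\setminus J=f_S^{-1}(S)\setminus f_S^{-1}(I)=f_S^{-1}(S\setminus I)$, exactly as in Lemma~\ref{proje}. Claim (2) then follows immediately by applying $f_S$ and using surjectivity: $f_S(\mathbb{N}^n\setminus J)=f_S(f_S^{-1}(S\setminus I))=S\setminus I$. For claim (1), the same identity yields a surjection from $\mathbb{N}^n\setminus J$ onto $S\setminus I$, so $|S\setminus I|\leq|\mathbb{N}^n\setminus J|$ and finiteness of $J$ in $\mathbb{N}^n$ forces finiteness of $S\setminus I$; conversely, since every element of $S$ has finitely many factorizations, each fibre $f_S^{-1}(\mathbf{x})$ is finite, so $\mathbb{N}^n\setminus J=\bigcup_{\mathbf{x}\in S\setminus I}f_S^{-1}(\mathbf{x})$ is a finite union of finite sets whenever $S\setminus I$ is finite.

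I do not expect a real obstacle here: the finiteness equivalence and the image formula are formal consequences of $f_S$ being surjective with finite fibres, independently of whether the distinguished subset of $S$ is a submonoid or an ideal, and the only point requiring the ideal hypothesis is the stability computation $I+S\subseteq I$ that makes $f_S^{-1}(I)$ absorbing under addition by $\mathbb{N}^n$. The mild care needed is simply to invoke surjectivity in the right places, namely to get $f_S(f_S^{-1}(X))=X$ and the identity $\mathbb{N}^n\setminus f_S^{-1}(I)=f_S^{-1}(S\setminus I)$.
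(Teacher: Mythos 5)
Your proposal is correct and follows essentially the same route as the paper: the paper likewise reduces claims (1) and (2) to the identity $\mathbb{N}^n\setminus f_S^{-1}(I)=f_S^{-1}(S\setminus I)$ and the surjectivity/finite-fibre arguments of Lemma~\ref{proje}, merely remarking that the ideal property of $f_S^{-1}(I)$ "is not difficult to see." Your only addition is to write out that easy verification ($f_S(\mathbf{x}+\mathbf{y})=f_S(\mathbf{x})+f_S(\mathbf{y})\in I+S\subseteq I$) explicitly, which is exactly the intended argument.
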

\begin{proof}
It is not difficult to see that $f_S^{-1}(I)$ is an ideal of $\mathbb{N}^n$. Moreover, since $f_S$ is a surjective function, we have $\mathbb{N}^n \setminus f_S^{-1}(I)=f_S^{-1}(S)\setminus f_S^{-1}(I)= f_S^{-1}(S\setminus I)$. Therefore, claims (1) and (2) can be proved using the same arguments of Lemma~\ref{proje}. 
\end{proof}

Denote $\operatorname{M}(I)=\operatorname{Minimals}_\leq (f_S^{-1}(I))$. Then, $f_S^{-1}(I)=\operatorname{M}(I)+\mathbb{N}^n$ and $\operatorname{M}(I)$ is a finite set. In particular, $\mathbb{N}^n \setminus f_S^{-1}(I)=\{\mathbf{x}\in \mathbb{N}^n \mid \mathbf{y}\nleq \mathbf{x}, \text{ for all }\mathbf{y}\in \operatorname{M}(I)\}$. So, in order to compute the set $S\setminus I$, it suffices to find the set $\operatorname{M}(I)$ and use equality (2) of Proposition~\ref{prop:E(I)}. In \cite[Algorithm 16]{rosales2001irreducible}, the authors show a procedure to compute the set $\operatorname{M}(I)=\operatorname{Minimals}_\leq (f_S^{-1}(I))$. We provide a different strategy to compute it.


\noindent Let $S=\langle \mathbf{g}_1,\ldots,\mathbf{g}_n \rangle \subseteq \mathbb{N}^d$ be an affine semigroup and $I=X+S$ an ideal of $S$, with $X=\{\mathbf{u}_1,\ldots,\mathbf{u}_r\}$. Observe that, $\sum_{i=1}^n a_i\mathbf{e}_i\in f_S^{-1}(I)$ if and only if there exists $k\in \{1,\ldots,r\}$ and $\lambda_1,\ldots,\lambda_n\in \mathbb{N}$ such that $\sum_{i=1}^n a_i\mathbf{g}_i=\mathbf{u}_k+\sum_{i=1}^n \lambda_i\mathbf{g}_i$. Hence, consider the matrix $A^{(S)}=[ \mathbf{g}_1,\ldots,\mathbf{g}_n, -\mathbf{g}_1,\ldots,-\mathbf{g}_n]$. For $j\in \{1,\ldots,r\}$, let $M_{(S,j)}$ be the set of non-negative integer solutions of the non homogeneous diophantine linear system $A^{(S)}\mathbf{x}=\mathbf{u}_j$. Then, we have:
$$f_S^{-1}(I)=\bigcup_{j=1}^r\left \lbrace(x_1,\ldots,x_n)\in \mathbb{N}^n \mid (x_1,\ldots,x_n,x_{n+1},\ldots,x_{2n})\in M_{(S,j)}\right\rbrace.$$
\noindent Let $M'_{(S)}$ be the set of non-negative integer solutions of the homogeneous diophantine linear system $A^{(S)}\mathbf{x}=\mathbf{0}$ and $V_{(S,j)}=\operatorname{Minimals}_{\leq}(M_{(S,j)}$) ($\leq$ is the natural partial order in $\mathbb{N}^{2n}$). In particular, we have $M_{(S,j)}=\bigcup_{\mathbf{y}\in V_{(S,j)}}(\mathbf{y}+M'_{(S)})$ (see \cite[Theorem 5.2]{pison2004nsolutions}). Let us denote $V_{(I)}=\bigcup_{j=1}^r V_{(S,j)}$. As a consequence, by the previous expression of $f_S^{-1}(I)$, we obtain:
$$\operatorname{M}(I)=\operatorname{Minimals}_{\leq} \left(\left \lbrace(x_1,\ldots,x_n)\in \mathbb{N}^n \mid (x_1,\ldots,x_n,x_{n+1},\ldots,x_{2n})\in V_{(I)}\right\rbrace \right).$$


\noindent For all $j\in \{1,\ldots,r\}$, the set $V_{(S,j)}$ can be computed in \texttt{GAP}, using the packages \texttt{numericalsgps} and \texttt{NormalizInterface}, as in Example~\ref{exa:C2}. Once the set $\operatorname{Minimals}_{\leq}(f_S^{-1}(I))$ is obtained, the following algorithm allows to check if $S\setminus I$ is finite and, if so, to compute its elements.

\begin{Algorithm}\label{alg:3}
Let $S=\langle \mathbf{g}_{1},\mathbf{g}_{2},\ldots,\mathbf{g}_{n}\rangle \subseteq \mathbb{N}^d$ be an affine semigroup and $I=X+S$ an ideal of $S$, $X=\{\mathbf{u}_1,\ldots,\mathbf{u}_r\}$. To test if $S\setminus I$ is finite and, if so, compute it, we can consider the following steps.
\begin{enumerate}
\item Consider the matrix $A^{(S)}=[\mathbf{g}_{1},\mathbf{g}_{2},\ldots,\mathbf{g}_{n}, - \mathbf{g}_{1},-\mathbf{g}_{2},\ldots,-\mathbf{g}_{n}]$, where each element is identified as a column vector.
\item For all $j\in \{1,\ldots,r\}$, compute the (finite) set $V_{(S,j)}\subseteq \mathbb{N}^{2n}$ of minimal (with respect to the natural partial order in $\mathbb{N}^{2n}$) non-negative integer solutions of the non homogeneous diophantine linear system $A^{(S)}\mathbf{x}=\mathbf{u}_j$.
\item Set $V_{(I)}=\bigcup_{j=1}^r V_{(S,j)}$.
\item Set $\operatorname{M}(I)=\operatorname{Minimals}_{\leq} \left(\left \lbrace(x_1,\ldots,x_n)\in \mathbb{N}^n \mid (x_1,\ldots,x_n,x_{n+1},\ldots,x_{2n})\in V_{(I)}\right\rbrace \right)$.
\item For all $i\in \{1,\ldots,n\}$ check if there exists $k_i\in\mathbb{N}$ such that $ k_i\mathbf{e}_i\in \operatorname{M}(I)$. If for some $i\in \{1,\ldots,n\}$ this condition does not hold, then $S\setminus I$ is not finite.
\item If the previous condition holds, then compute $Q=\{\mathbf{x}\in \mathbb{N}^n \mid \mathbf{y}\nleq \mathbf{x}, \text{ for all }\mathbf{y}\in \operatorname{M}(I)\}$.
\item Compute $S\setminus I=\{f_S(\mathbf{x})\mid \mathbf{x} \in Q\}$.
\end{enumerate}
\end{Algorithm}


Considering the computation time, the main difference between Algorithm~\ref{alg:2} and Algorithm~\ref{alg:3} concerns with the computation of the set $\operatorname{M}(I)$, against the time spent to test, for each element $\mathbf{x}=(x_1,\ldots,x_n)\in P$, if $\sum_{i=1}^n x_i \mathbf{g}_i\in I$. 

\subsection{An approach using commutative algebra}\label{subsec: commAlg}

Assume $S=\langle \mathbf{g}_1,\ldots,\mathbf{g}_n \rangle\subseteq \mathbb{N}^d$ and let $K$ be a field.  Consider the semigroup ring $K[S]=K[\mathbf{Y}^{\mathbf{s}}\mid \mathbf{s} \in S]=K[\mathbf{Y}^{\mathbf{g}_1}, \ldots, \mathbf{Y}^{\mathbf{g}_n}]$, where if $\mathbf{s}=(s_1,\ldots,s_d)\in \mathbb{N}^d$, then $\mathbf{Y}^{\mathbf{s}}=Y_1^{s_1}Y_2^{s_2}\cdots Y_d^{s_d}$. 
If $I=X+S$, $X=\{\mathbf{u}_1,\ldots,\mathbf{u}_r\}\subset S$, define $I_{K[S]}=(\mathbf{Y}^{\mathbf{u}_1},\ldots,\mathbf{Y}^{\mathbf{u}_r})$, that is a monomial ideal of $K[S]$.

\medskip
Let $S\subseteq \mathbb{N}^d$ be an affine semigroup and $I$ be an ideal of $S$. Then $\mathbf{h}\in S\setminus I$ if and only if $\mathbf{Y}^{\mathbf{h}}\notin I_{K[S]}$, that is, $\mathbf{Y}^{\mathbf{h}}$ is a monomial not belonging to $I_{K[S]}$. In particular $S\setminus I$ is a finite set if and only if the set $\{\mathbf{Y}^{\mathbf{h}}\in K[S]\mid \mathbf{Y}^{\mathbf{h}}\notin I_{K[S]}\}$ is finite. 

\medskip
Let $\mathbf{u}_i=\sum_{j=1}^n a_{ij}\mathbf{g}_j$ be a factorization of $\mathbf{u}_i$ and denote $\mathbf{a}_i=(a_{i1},\ldots,a_{in})\in \mathbb{N}^n$, for $i\in \{1,\ldots,r\}$. Consider the polynomial ring $K[Z_1,\ldots,Z_n]$ and, as above, if $\mathbf{t}=(t_1,\ldots,t_n)\in \mathbb{N}^n$, denote $\mathbf{Z}^{\mathbf{t}}=Z_1^{t_1} Z_2^{t_2} \cdots Z_n^{t_n}$. We consider the following surjective ring homomorphism:

$$ \psi: K[Z_1,\ldots,Z_n] \longrightarrow K[S]\ \ \ \ \mbox{defined by}\ \ \ \ \ Z_i \longmapsto \mathbf{Y}^{\mathbf{g}_i}.$$

Denote $J_S=\ker(\psi)$ (called the \emph{defining ideal} of $S$), and observe that $\psi(\mathbf{Z}^{\mathbf{a}_i})=\mathbf{Y}^{\mathbf{u}_i}$ for each $i\in \{1,\ldots,r\}$. The map $\psi$ induces the following ring isomorphism:

$$ \tilde{\psi}: \frac{K[Z_1,\ldots,Z_n]}{J_S} \longrightarrow K[S]\ \ \ \ \mbox{defined by}\ \ \ \ \ \tilde{\psi}(f+J_S) \longmapsto \psi(f).$$

Observe that the set $\tilde{\psi}^{-1}(I_{K[S]})$ is an ideal of $\frac{K[Z_1,\ldots,Z_n]}{J_S}$. Moreover, we have the following set equality:

$$\tilde{\psi}^{-1}(I_{K[S]})=\frac{J_S+(\mathbf{Z}^{\mathbf{a}_1}, \ldots, \mathbf{Z}^{\mathbf{a}_r})}{J_S}.$$

In fact, if $\tilde{f}=f+J_S \in \tilde{\psi}^{-1}(I_{K[S]})$, then $\tilde{\psi}(\tilde{f})\in I_{K[S]}$. In particular there exist $h_1,\ldots,h_r\in K[S]$ such that $\tilde{\psi}(\tilde{f})=\sum_{k=1}^r h_k\mathbf{Y}^{\mathbf{u}_k}=\tilde{\psi}(\sum_{k=1}^n \overline{h}_k \mathbf{Z}^{\mathbf{a}_k}+J_S)$ with $\overline{h}_k+J_S\in \tilde{\psi}^{-1}(h_k)$ for all $k\in \{1,\ldots,r\}$. Denote $\overline{h}=\sum_{k=1}^n \overline{h}_k \mathbf{Z}^{\mathbf{a}_k}$, in particular $\overline{h}\in (\mathbf{Z}^{\mathbf{a}_1}, \ldots, \mathbf{Z}^{\mathbf{a}_r})$. Since $\tilde{\psi}$ is an isomorphism we have $\tilde{f}=f+J_S=\overline{h}+J_S$, so $f-\overline{h}\in J_S$ and in particular $f\in J_S+(\mathbf{Z}^{\mathbf{a}_1}, \ldots, \mathbf{Z}^{\mathbf{a}_r})$. This means that $\tilde{f}=f+J_S\in \frac{J_S+(\mathbf{Z}^{\mathbf{a}_1}, \ldots, \mathbf{Z}^{\mathbf{a}_r})}{J_S}$. 

Conversely, if $f+J_S \in \frac{J_S+(\mathbf{Z}^{\mathbf{a}_1}, \ldots, \mathbf{Z}^{\mathbf{a}_r})}{J_S}$, then $f=g+h$ where $g\in J_S$ and $h=\sum_{k=1}^n h_k \mathbf{Z}^{\mathbf{a}_k}$, $h_k\in K[Z_1,\ldots,Z_n]$ for each $k\in \{1,\ldots,r\}$. Therefore, $\tilde{\psi}(f+J_S)=\psi(f)=\psi(g)+\psi(h)=\psi(h)=\psi (\sum_{k=1}^n h_k \mathbf{Z}^{\mathbf{a}_k})=\sum_{k=1}^n \psi(h_k) \mathbf{Y}^{\mathbf{u}_k}\in I_{K[S]}$, that is $f+J_S\in \tilde{\psi}^{-1}(I_{K[S]})$.

\medskip

As a consequence of the previous set equality, we have the following isomorphism:

$$ \frac{K[S]}{I_{K[S]}}  \cong \left( \frac{K[Z_1,\ldots,Z_n]}{J_S}\right)  \diagup\left( \frac{J_S+(\mathbf{Z}^{\mathbf{a}_1}, \ldots, \mathbf{Z}^{\mathbf{a}_r})}{J_S}\right) \cong \frac{K[Z_1,\ldots,Z_n]}{J_S+(\mathbf{Z}^{\mathbf{a}_1}, \ldots, \mathbf{Z}^{\mathbf{a}_r})}.  $$



The isomorphism above is described by the following map:


\begin{equation}\label{ismorphism}
\overline{\psi}:\frac{K[Z_1,\ldots,Z_n]}{J_S+(\mathbf{Z}^{\mathbf{a}_1}, \ldots, \mathbf{Z}^{\mathbf{a}_r})}\longrightarrow \frac{K[S]}{I_{K[S]}},\quad f+(J_S+(\mathbf{Z}^{\mathbf{a}_1}, \ldots, \mathbf{Z}^{\mathbf{a}_r}))\mapsto \psi(f)+I_{K[S]}
\end{equation}

In the following, if $J$ is an ideal of a polynomial ring $R$ and $\preceq$ is a monomial order on $R$, denote the initial ideal of $J$ with respect to $\preceq$ by $\operatorname{in}_\preceq (J)$.

\begin{lemma} \label{lem:zero-dim-idealS}
In the framework introduced above, let $P_S=J_S+(\mathbf{Z}^{\mathbf{a}_1}, \ldots, \mathbf{Z}^{\mathbf{a}_r})$ and $\preceq$ be a monomial order on $K[Z_1,\ldots,Z_n]$. Then $\{\mathbf{Y}^{\mathbf{h}}\in K[S] \mid \mathbf{Y}^{\mathbf{h}}\notin I_{K[S]}\}=\{\psi(\mathbf{Z}^{\mathbf{t}})\mid \mathbf{Z}^{\mathbf{t}}\notin \operatorname{in}_\preceq(P_S)\}$.

\end{lemma}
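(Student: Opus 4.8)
The plan is to combine the isomorphism $\overline{\psi}$ from (\ref{ismorphism}) with the standard-monomial description of the quotient $K[Z_1,\ldots,Z_n]/P_S$. I would rely on two structural facts. First, it is a standard fact from Gröbner basis theory (Macaulay's basis theorem) that the residues of the monomials $\mathbf{Z}^{\mathbf{t}}\notin \operatorname{in}_\preceq(P_S)$ form a $K$-basis of $K[Z_1,\ldots,Z_n]/P_S$. Second, since $I_{K[S]}$ is a monomial ideal and $I=X+S$, a monomial $\mathbf{Y}^{\mathbf{s}}$ lies in $I_{K[S]}$ if and only if $\mathbf{s}\in I$; consequently the residues $\{\mathbf{Y}^{\mathbf{h}}+I_{K[S]}\mid \mathbf{h}\in S\setminus I\}$ form a $K$-basis of $K[S]/I_{K[S]}$, so the monomials $\mathbf{Y}^{\mathbf{h}}$ with $\mathbf{Y}^{\mathbf{h}}\notin I_{K[S]}$ are exactly those with $\mathbf{h}\in S\setminus I$. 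Throughout I use that $\psi(\mathbf{Z}^{\mathbf{t}})=\mathbf{Y}^{f_S(\mathbf{t})}$ is itself a monomial of $K[S]$.

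For the inclusion $\supseteq$, I would take a monomial $\mathbf{Z}^{\mathbf{t}}\notin \operatorname{in}_\preceq(P_S)$. As $\mathbf{Z}^{\mathbf{t}}$ is a monomial, $\mathbf{Z}^{\mathbf{t}}\in P_S$ would force $\mathbf{Z}^{\mathbf{t}}=\operatorname{in}_\preceq(\mathbf{Z}^{\mathbf{t}})\in \operatorname{in}_\preceq(P_S)$, a contradiction; hence $\mathbf{Z}^{\mathbf{t}}\notin P_S$ and $\mathbf{Z}^{\mathbf{t}}+P_S\neq 0$. Applying the isomorphism $\overline{\psi}$ gives $\psi(\mathbf{Z}^{\mathbf{t}})+I_{K[S]}\neq 0$, that is $\psi(\mathbf{Z}^{\mathbf{t}})=\mathbf{Y}^{f_S(\mathbf{t})}\notin I_{K[S]}$, so $\psi(\mathbf{Z}^{\mathbf{t}})$ lies in the left-hand set and moreover $f_S(\mathbf{t})\in S\setminus I$.

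The reverse inclusion $\subseteq$ is the crux, and the obstacle is that a given $\mathbf{h}\in S\setminus I$ may admit many factorizations $\mathbf{h}=f_S(\mathbf{c})$, none of whose monomials $\mathbf{Z}^{\mathbf{c}}$ need be standard, so one cannot simply name a preimage. I would instead argue with the two bases. Transporting the standard-monomial basis through $\overline{\psi}$ produces a $K$-basis of $K[S]/I_{K[S]}$ consisting of the monomial residues $\mathbf{Y}^{f_S(\mathbf{t})}+I_{K[S]}$ with $\mathbf{Z}^{\mathbf{t}}\notin \operatorname{in}_\preceq(P_S)$; by the $\supseteq$ step each $f_S(\mathbf{t})\in S\setminus I$, and distinct standard monomials yield distinct (linearly independent) residues, hence distinct exponents $f_S(\mathbf{t})$. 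Thus this basis is a subset of the monomial basis $\{\mathbf{Y}^{\mathbf{h}}+I_{K[S]}\mid \mathbf{h}\in S\setminus I\}$. Since a basis cannot be a proper subset of another basis, the two coincide, which (using that $\mathbf{s}\mapsto \mathbf{Y}^{\mathbf{s}}+I_{K[S]}$ is injective on $S\setminus I$) gives $\{f_S(\mathbf{t})\mid \mathbf{Z}^{\mathbf{t}}\notin \operatorname{in}_\preceq(P_S)\}=S\setminus I$. Applying $\mathbf{s}\mapsto \mathbf{Y}^{\mathbf{s}}$ and recalling $\psi(\mathbf{Z}^{\mathbf{t}})=\mathbf{Y}^{f_S(\mathbf{t})}$ then yields the desired equality of monomial sets.

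As an explicit alternative to the two-basis step, one could instead compute a normal form: writing the normal form of $\mathbf{Z}^{\mathbf{c}}$ modulo a Gröbner basis of $P_S$ as a combination $\sum_k \alpha_k \mathbf{Z}^{\mathbf{t}_k}$ of standard monomials and applying $\overline{\psi}$ gives $\mathbf{Y}^{\mathbf{h}}+I_{K[S]}=\sum_k \alpha_k \mathbf{Y}^{f_S(\mathbf{t}_k)}+I_{K[S]}$; since the residues $\mathbf{Y}^{f_S(\mathbf{t}_k)}+I_{K[S]}$ are linearly independent basis elements while the left side is a single basis element, exactly one term survives, with coefficient $1$ and $f_S(\mathbf{t})=\mathbf{h}$, exhibiting the required standard monomial. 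Either route closes the argument.
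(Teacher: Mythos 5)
Your proof is correct, and on the easy inclusion $\supseteq$ it coincides with the paper's: a monomial lying in $P_S$ would be its own initial term, so a standard monomial $\mathbf{Z}^{\mathbf{t}}\notin \operatorname{in}_\preceq(P_S)$ is not in $P_S$, and injectivity of $\overline{\psi}$ in (\ref{ismorphism}) gives $\psi(\mathbf{Z}^{\mathbf{t}})\notin I_{K[S]}$. On the hard inclusion $\subseteq$ you take a genuinely different route. The paper argues directly: for $\mathbf{h}\in S$ with $\mathbf{Y}^{\mathbf{h}}\notin I_{K[S]}$ it picks an \emph{arbitrary} factorization $\mathbf{t}\in f_S^{-1}(\mathbf{h})$, supposes $\mathbf{Z}^{\mathbf{t}}\in \operatorname{in}_\preceq(P_S)$, writes $\mathbf{Z}^{\mathbf{t}}+f\in P_S$, applies $\psi$, and uses the monomial-ideal property of $I_{K[S]}$ to force $\mathbf{Y}^{\mathbf{h}}\in I_{K[S]}$, a contradiction. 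You instead invoke Macaulay's basis theorem, transport the standard-monomial basis of $K[Z_1,\ldots,Z_n]/P_S$ through $\overline{\psi}$, and match it against the monomial basis $\{\mathbf{Y}^{\mathbf{h}}+I_{K[S]}\mid \mathbf{h}\in S\setminus I\}$ of $K[S]/I_{K[S]}$, concluding from the fact that a basis contained in a basis must equal it (your normal-form variant is an equivalent bookkeeping of the same idea). Your route costs a little more machinery, but it buys robustness precisely at the point you identify as the crux: $\psi$ identifies distinct monomials, so cancellation can occur after applying it. Indeed, if $\mathbf{t}\neq\mathbf{t}'$ are two factorizations of the same $\mathbf{h}$, then $\mathbf{Z}^{\mathbf{t}}-\mathbf{Z}^{\mathbf{t}'}\in J_S\subseteq P_S$, so the $\preceq$-larger of the two monomials always lies in $\operatorname{in}_\preceq(P_S)$; consequently the paper's conclusion cannot hold for an \emph{arbitrary} factorization whenever $\mathbf{h}\in S\setminus I$ has more than one factorization (e.g.\ $S=\langle (1,0),(0,1),(1,1)\rangle$, $I=(2,2)+S$, $\mathbf{h}=(1,1)$, where $Z_1Z_2-Z_3\in J_S$), and the paper's final step silently assumes that $\mathbf{Y}^{\mathbf{h}}$ survives with nonzero coefficient in $\psi(\mathbf{Z}^{\mathbf{t}}+f)$. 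The paper's elementary argument is easily repaired --- take $\mathbf{Z}^{\mathbf{t}}$ to be the $\preceq$-smallest element of $f_S^{-1}(\mathbf{h})$, so no monomial of the tail $f$ can map to $\mathbf{Y}^{\mathbf{h}}$ --- and then it needs nothing beyond the definition of the initial ideal; your basis-matching argument avoids the issue altogether by never naming a specific preimage. Both arguments, correctly, use no finiteness hypothesis on $S\setminus I$.
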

\begin{proof}

Let $\overline{\psi}$ be the isomorphism above. In particular, if $\mathbf{Z}^\mathbf{t}$ is a monomial in $K[Z_1,\ldots,Z_n]$, then $\overline{\psi}(\mathbf{Z}^{\mathbf{t}}+P_S)=\psi(\mathbf{Z}^{\mathbf{t}})+I_{K[S]}$. If $\mathbf{Z}^{\mathbf{t}}\notin \operatorname{in}_\preceq(P_S)$, then trivially $\mathbf{Z}^{\mathbf{t}}\notin P_S$. Since $\overline{\psi}$ is injective we obtain $\psi(\mathbf{Z}^{\mathbf{t}})+I_{K[S]}=\overline{\psi}(\mathbf{Z}^{\mathbf{t}}+P_S)\neq \mathbf{0}$, that is, $\psi(\mathbf{Z}^{\mathbf{t}})\notin I_{K[S]}$. Conversely, let $\mathbf{Y}^\mathbf{h}\in K[S]$ such that $\mathbf{Y}^\mathbf{h}\notin I_{K[S]}$. This means that $\mathbf{Y}^{\mathbf{h}}+I_{K[S]}\neq \mathbf{0}$ in $K[S]/I_{K[S]}$. Since $\mathbf{h}\in S$, there exists $\mathbf{t}=\sum_{i=1}^n t_i\mathbf{e}_i\in \mathbb{N}^n$ such that $\mathbf{h}=\sum_{i=1}^n t_i\mathbf{g}_i$. Hence, for the monomial $\mathbf{Z}^\mathbf{t}\in K[Z_1,\ldots,Z_n]$, we have $\psi(\mathbf{Z}^{\mathbf{t}})= \mathbf{Y}^{\mathbf{h}}$. Suppose $\mathbf{Z}^{\mathbf{t}}\in \operatorname{in}_\preceq(P_S)$. Then, there exists $f\in K[Z_1,\ldots,Z_n]$ such that $\mathbf{Z}^{\mathbf{t}}+f\in P_S$. So, $\mathbf{Z}^{\mathbf{t}}+f=g+\sum_{k=1}^r h_k\mathbf{Z}^{\mathbf{a}_k}$, $g\in J_S$ and $h_k\in K[Z_1,\ldots,Z_n]$. Therefore, $\psi(\mathbf{Z}^{\mathbf{t}})+\psi(f)=\psi(g)+\sum_{k=1}^r \psi(h_k)\psi(\mathbf{Z}^{\mathbf{a}_k})=\sum_{k=1}^r \psi(h_k)\mathbf{Y}^{\mathbf{u}_k}$. In particular, $\psi(\mathbf{Z}^{\mathbf{t}})+\psi(f)\in I_{K[S]}$. Since $I_{K[S]}$ is a monomial ideal and $\psi(\mathbf{Z}^{\mathbf{t}})=\mathbf{Y}^{\mathbf{h}}$ is a monomial, we obtain $\mathbf{Y}^{\mathbf{h}}\in I_{K[S]}$, a contradiction. So, we have $\mathbf{Z}^{\mathbf{t}}\notin \operatorname{in}_\preceq(P_S)$.
\end{proof}

Consider the set $\{\mathbf{Z}^{\mathbf{t}}\in K[Z_1,\ldots,Z_n] \mid \mathbf{Z}^{\mathbf{t}}\notin \operatorname{in}_\preceq(P_S)\}$ and suppose it is finite for some monomial order on $K[Z_1,\ldots,Z_n]$. This property is known as $P_S=J_S+(\mathbf{Z}^{\mathbf{a}_1}, \ldots, \mathbf{Z}^{\mathbf{a}_r})$ is a \emph{zero dimensional ideal} of $K[Z_1,\ldots,Z_n]$. A zero dimensional ideal is characterized by the following known result (Theorem 6, Chapter 5, \S 3,  \cite{CLO}):

\begin{theorem}[\cite{CLO}]\label{thm:CLO}
Let $K$ be a field and $I\subseteq K[X_1,\ldots,X_n]$ be an ideal of a polynomial ring. Fix a monomial order $\preceq$ in $K[X_1,\ldots,X_n]$. Then the following are equivalent:
\begin{enumerate}
\item[i)] For each $i\in \{1,\ldots,n\}$ there is some $m_i\geq 0$ such that $X^{m_i}\in \operatorname{in}_\preceq (I)$.

\item[ii)] Let $G$ be a Gr\"obner basis for $I$. Then for each $i\in \{1,\ldots,n\}$ there is some $m_i\geq 0$ such that $X^{m_i}=\operatorname{in}_\preceq (g)$ for some $g\in G$.

\item[iii)] The set $\{X^\alpha \mid X^\alpha \notin \operatorname{in}_\preceq (I)\}$ is finite.

\item[iv)] The $K$-vector space $K[X_1,\ldots,X_n] /I$ is finite-dimensional.

\end{enumerate}  
\end{theorem}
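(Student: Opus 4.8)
The plan is to prove the four conditions equivalent by establishing the cyclic chain of implications $\text{(i)}\Rightarrow\text{(ii)}\Rightarrow\text{(iii)}\Rightarrow\text{(iv)}\Rightarrow\text{(i)}$. Throughout I would lean on two standard facts about Gr\"obner bases: first, that $\operatorname{in}_\preceq(I)$ is a monomial ideal generated by the leading terms $\{\operatorname{in}_\preceq(g)\mid g\in G\}$ of any Gr\"obner basis $G$ of $I$; and second, \emph{Macaulay's basis theorem}, which asserts that the residue classes of the \emph{standard monomials}, namely those $X^\alpha\notin \operatorname{in}_\preceq(I)$, form a $K$-vector space basis of $K[X_1,\ldots,X_n]/I$. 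Here I read $X^{m_i}$ in condition (i) as the pure power $X_i^{m_i}$.

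For $\text{(i)}\Rightarrow\text{(ii)}$ I would argue that, since $X_i^{m_i}\in\operatorname{in}_\preceq(I)$ lies in the monomial ideal generated by the $\operatorname{in}_\preceq(g)$, it is divisible by some $\operatorname{in}_\preceq(g)$; but any monomial dividing the pure power $X_i^{m_i}$ is itself a power $X_i^{k}$ with $k\leq m_i$, so $\operatorname{in}_\preceq(g)$ is a pure power of $X_i$, which is exactly (ii). For $\text{(ii)}\Rightarrow\text{(iii)}$, condition (ii) yields $X_i^{m_i}\in\operatorname{in}_\preceq(I)$ for each $i$, so a standard monomial $X^\alpha=X_1^{\alpha_1}\cdots X_n^{\alpha_n}$ cannot be a multiple of any $X_i^{m_i}$; hence $\alpha_i<m_i$ for all $i$, confining the standard monomials to the finite box $\prod_{i}\{0,\ldots,m_i-1\}$ and giving finiteness. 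The step $\text{(iii)}\Rightarrow\text{(iv)}$ is then immediate from Macaulay's theorem, since a basis with finitely many elements forces $K[X_1,\ldots,X_n]/I$ to be finite-dimensional.

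The implication $\text{(iv)}\Rightarrow\text{(i)}$ is the one place where I expect a mildly delicate point. Fixing $i$, I would consider the residue classes of $1,X_i,X_i^2,\ldots$ in the finite-dimensional quotient; being infinitely many vectors in a finite-dimensional space, they must be linearly dependent, producing a nonzero univariate polynomial $p=\sum_{j=0}^{m}c_jX_i^{\,j}\in I$ with $c_m\neq 0$. The fact to check is that $\operatorname{in}_\preceq(p)=c_mX_i^{\,m}$, which rests on the elementary observation that for any monomial order one has $X_i^{a}\prec X_i^{b}$ whenever $a<b$ (multiply the relation $\mathbf{0}\prec X_i^{b-a}$, i.e.\ $1\prec X_i^{b-a}$, by $X_i^{a}$). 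It follows that $X_i^{\,m}\in\operatorname{in}_\preceq(I)$, which is (i).

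The main obstacle here is not really an obstacle so much as the single nontrivial input, namely Macaulay's basis theorem underlying $\text{(iii)}\Rightarrow\text{(iv)}$; once that is granted, every remaining step reduces to the divisibility behaviour of pure powers in a monomial ideal and the compatibility of any monomial order with the degree along a single variable.
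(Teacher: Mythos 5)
Your proof is correct, but note that the paper itself gives no proof of this statement: it is quoted verbatim from \cite{CLO} (Theorem 6, Chapter 5, \S 3), so there is nothing internal to compare against. Your argument --- the cycle $\text{(i)}\Rightarrow\text{(ii)}\Rightarrow\text{(iii)}\Rightarrow\text{(iv)}\Rightarrow\text{(i)}$ via divisibility of pure powers in the monomial ideal $\operatorname{in}_\preceq(I)$, the finite-box bound on standard monomials, Macaulay's spanning of the quotient, and the linear dependence of $1, X_i, X_i^2,\ldots$ forcing a polynomial in $I$ with leading term a pure power of $X_i$ --- is precisely the standard Finiteness Theorem proof found in that reference, so it is both correct and essentially the intended one.
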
 

Theorem~\ref{thm:CLO} suggests another way to prove the characterization of cofinitness of an ideal $I$ in a monoid $S$, given in Theorem~\ref{thm:cofinite-ideal}, and also a different procedure to compute $S\setminus I$.

\medskip
\begin{proof}[Alternative proof of Theorem~\ref{thm:cofinite-ideal}] In the following, denote $P_S=J_S+(\mathbf{Z}^{\mathbf{a}_1}, \ldots, \mathbf{Z}^{\mathbf{a}_r})$, with reference to the framework introduced in this section.

 \emph{Necessity.} Suppose that $S\setminus I$ is finite. Then the set $\{\mathbf{Y}^{\mathbf{h}}\in K[S]\mid \mathbf{Y}^{\mathbf{h}}\notin I_{K[S]}\}$ is finite. So, by Lemma~\ref{lem:zero-dim-idealS}, considering the ideal $P_S$ and $\preceq$ a monomial order on $K[Z_1,\ldots,Z_n]$, we have that $\{\psi(\mathbf{Z}^{\mathbf{t}})\mid \mathbf{Z}^{\mathbf{t}}\notin \operatorname{in}_\preceq(P_S)\}$ is a finite set. We show that the set $\{\mathbf{Z}^{\mathbf{t}}\in K[Z_1,\ldots,Z_n] \mid \mathbf{Z}^{\mathbf{t}}\notin \operatorname{in}_\preceq(P_S)\}$ is finite. If we suppose it is not finite, since $\{\psi(\mathbf{Z}^{\mathbf{t}})\mid \mathbf{Z}^{\mathbf{t}}\notin \operatorname{in}_\preceq(P_S)\}$ is finite, there exist $\mathbf{Z}^{\mathbf{t}_1}, \mathbf{Z}^{\mathbf{t}_2}\notin \operatorname{in}_\preceq(P_S)\}$, $\mathbf{Z}^{\mathbf{t}_1}\neq \mathbf{Z}^{\mathbf{t}_2}$, such that $\psi(\mathbf{Z}^{\mathbf{t}_1})=\psi(\mathbf{Z}^{\mathbf{t}_2})$. Hence, $\mathbf{Z}^{\mathbf{t}_1}-\mathbf{Z}^{\mathbf{t}_2}\in J_S\subseteq P_S$ and this implies $\mathbf{Z}^{\mathbf{t}_1}\in \operatorname{in}_\preceq(P_S)$ or $\mathbf{Z}^{\mathbf{t}_2}\in \operatorname{in}_\preceq (P_S)$, a contradiction. Therefore, the set $\{\mathbf{Z}^{\mathbf{t}}\in K[Z_1,\ldots,Z_n] \mid \mathbf{Z}^{\mathbf{t}}\notin \operatorname{in}_\preceq(P_S)\}$ is finite and by Theorem~\ref{thm:CLO} we obtain that for all $i \in \{1,\ldots,n\}$ there exists an integer $k_i\geq 0$ such that $Z_i^{k_i}\in \operatorname{in}_\preceq (P_S)$. Hence, there exists $f_i\in K[Z_1,\ldots,Z_n]$ such that $Z_i^{k_i}+f_i\in P_S$.   So, having in mind the isomorphism in (\ref{ismorphism}), we obtain $\psi(Z_i^{k_i}+f_i)=\mathbf{Y}^{k_i\mathbf{g}_i}+\psi(f_i)\in I_{K[S]}$. Since $I_{K[S]}$ is a monomial ideal and $\mathbf{Y}^{k_i\mathbf{g}_i}$ is a monomial, we obtain $\mathbf{Y}^{k_i\mathbf{g}_i}\in I_{K[S]}$, that is, $k_i\mathbf{g}_i\in I$.  As a consequence, for all $i\in \{1,\ldots,n\}$, we have $k_i\mathbf{g}_i\in I$.

\emph{Sufficiency.} For every $i\in \{1, \ldots,n\}$, suppose there exists $k_i\in \mathbb{N}\setminus \{0\}$ such that $k_i\mathbf{g}_i\in I$. Then $\mathbf{Y}^{k_i\mathbf{g}_i}\in I_{K[S]}$. In particular, by the isomorphism in (\ref{ismorphism}), we have $Z_i^{k_i}\in P_S$. By Theorem~\ref{thm:CLO}, given a monomial order $\preceq$ on $K[Z_1,\ldots,Z_n]$, we obtain that $\{\mathbf{Z}^{\mathbf{t}}\in K[Z_1,\ldots,Z_n] \mid \mathbf{Z}^{\mathbf{t}}\notin \operatorname{in}_\preceq(P_S)\}$ is a finite a set. As a consequence, the set $\{\psi(\mathbf{Z}^{\mathbf{t}})\mid \mathbf{Z}^{\mathbf{t}}\notin \operatorname{in}_\preceq(P_S)\}$ is finite and, by Lemma~\ref{lem:zero-dim-idealS}, the set  $\{\mathbf{Y}^{\mathbf{h}}\in K[S]\mid \mathbf{Y}^{\mathbf{h}}\notin I_{K[S]}\}$ is finite. This means that $S\setminus I$ is finite. 
\end{proof}

Following the arguments developed in this section, we can reformulate Theorem~\ref{thm:cofinite-ideal} as follows.

\begin{corollary}
Let $S\subseteq \mathbb{N}^d$ be an affine semigroup, $I$ an ideal of $S$. As before, let $P_S=J_S+(\mathbf{Z}^{\mathbf{a}_1}, \ldots, \mathbf{Z}^{\mathbf{a}_r})\subseteq K[Z_1,\ldots,Z_n]$. Then $S\setminus I$ is finite if and only if $P_S$ is a zero dimensional ideal of $K[Z_1,\ldots,Z_n]$.
\end{corollary}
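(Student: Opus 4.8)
The plan is to prove the corollary as an immediate reformulation of Theorem~\ref{thm:cofinite-ideal}, using the dictionary established by Lemma~\ref{lem:zero-dim-idealS} together with the characterization of zero-dimensional ideals in Theorem~\ref{thm:CLO}. The key observation is that all the substantive work has already been done: the alternative proof of Theorem~\ref{thm:cofinite-ideal} shows exactly that $S\setminus I$ is finite precisely when the set $\{\mathbf{Z}^{\mathbf{t}}\in K[Z_1,\ldots,Z_n]\mid \mathbf{Z}^{\mathbf{t}}\notin \operatorname{in}_\preceq(P_S)\}$ is finite. Thus the corollary is essentially a matter of reading off the equivalence between this finiteness condition and the definition of zero-dimensionality.

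First I would fix a monomial order $\preceq$ on $K[Z_1,\ldots,Z_n]$. Then I would argue that $S\setminus I$ is finite if and only if the set $\{\mathbf{Z}^{\mathbf{t}}\mid \mathbf{Z}^{\mathbf{t}}\notin \operatorname{in}_\preceq(P_S)\}$ is finite. One direction is Lemma~\ref{lem:zero-dim-idealS}: since $\mathbf{h}\in S\setminus I$ is equivalent to $\mathbf{Y}^{\mathbf{h}}\notin I_{K[S]}$, finiteness of $S\setminus I$ is equivalent to finiteness of $\{\mathbf{Y}^{\mathbf{h}}\mid \mathbf{Y}^{\mathbf{h}}\notin I_{K[S]}\}=\{\psi(\mathbf{Z}^{\mathbf{t}})\mid \mathbf{Z}^{\mathbf{t}}\notin \operatorname{in}_\preceq(P_S)\}$. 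The passage from the finiteness of this image set to the finiteness of the full monomial set $\{\mathbf{Z}^{\mathbf{t}}\mid \mathbf{Z}^{\mathbf{t}}\notin \operatorname{in}_\preceq(P_S)\}$ is precisely the fiber argument in the necessity part of the alternative proof: if the latter were infinite while the former is finite, two distinct monomials $\mathbf{Z}^{\mathbf{t}_1}\neq \mathbf{Z}^{\mathbf{t}_2}$ outside $\operatorname{in}_\preceq(P_S)$ would have $\psi(\mathbf{Z}^{\mathbf{t}_1})=\psi(\mathbf{Z}^{\mathbf{t}_2})$, forcing $\mathbf{Z}^{\mathbf{t}_1}-\mathbf{Z}^{\mathbf{t}_2}\in J_S\subseteq P_S$ and hence one of the two into $\operatorname{in}_\preceq(P_S)$, a contradiction.

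Finally I would invoke the equivalence (iii)$\Leftrightarrow$(iv) of Theorem~\ref{thm:CLO}: the set $\{\mathbf{Z}^{\mathbf{t}}\mid \mathbf{Z}^{\mathbf{t}}\notin \operatorname{in}_\preceq(P_S)\}$ is finite if and only if $K[Z_1,\ldots,Z_n]/P_S$ is a finite-dimensional $K$-vector space, which is exactly the definition of $P_S$ being a zero-dimensional ideal (as recalled just before the statement of Theorem~\ref{thm:CLO}). Chaining these equivalences yields that $S\setminus I$ is finite if and only if $P_S$ is zero-dimensional, independently of the chosen monomial order. I do not anticipate a genuine obstacle here, since every step is a citation of an already-proven fact; the only point requiring a little care is making explicit that the notion of zero-dimensionality does not depend on $\preceq$, which follows from Theorem~\ref{thm:CLO} because finite-dimensionality of the quotient $K[Z_1,\ldots,Z_n]/P_S$ is an order-independent property.
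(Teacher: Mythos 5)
Your proof is correct and follows essentially the same route as the paper: the corollary is presented there as a reformulation obtained from the arguments of Subsection~\ref{subsec: commAlg}, and those arguments are exactly the ones you chain together, namely Lemma~\ref{lem:zero-dim-idealS}, the binomial (fiber) argument showing distinct monomials outside $\operatorname{in}_\preceq(P_S)$ have distinct images under $\psi$, and the equivalences of Theorem~\ref{thm:CLO}. Your explicit remark that zero-dimensionality is independent of the chosen monomial order is a point the paper leaves implicit, but it is the same argument.
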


\noindent For a different procedure to compute $S\setminus I$, recall that in the case $I\subseteq K[X_1,\ldots,X_n]$ is an ideal of a polynomial ring and $\preceq$ a monomial order in $K[X_1,\ldots,X_n]$, then the set $\{X^\alpha \mid X^\alpha \notin \operatorname{in}_\preceq (I)\}$ is a basis of $K[X_1,\ldots,X_n] /I$ as $K$-vector space. In particular one can use a computer algebra software, for instance \texttt{Macaulay2} \cite{macaulay2} or \texttt{Singular} \cite{Singular}, to test if one of the equivalent conditions of Theorem~\ref{thm:CLO} holds and, in such a case, to compute the set $\{X^\alpha \mid X^\alpha \notin \operatorname{in}_\preceq (I)\}$. 

\noindent So, if $\mathcal{B}$ is a basis of the $K$-vector space $K[Z_1,\ldots,Z_n]/P_S$, $P_S=J_S+(\mathbf{Z}^{\mathbf{a}_1}, \ldots, \mathbf{Z}^{\mathbf{a}_r})$, then $S\setminus I=\{\mathbf{h}\in \mathbb{N}^d \mid \mathbf{Y}^{\mathbf{h}}=\psi (\mathbf{Z}^{\mathbf{t}}), \mathbf{Z}^{\mathbf{t}}\in \mathcal{B}\}$.

\begin{Algorithm}\label{alg:4}\rm
Given $S=\langle \mathbf{g}_1,\ldots,\mathbf{g}_n \rangle\subseteq \mathbb{N}^d$ and an ideal $I$ of $S$, with $I=X+S$ and $X=\{\mathbf{u}_1,\ldots,\mathbf{u}_r\}\subset S$, in order to compute $S\setminus I$ we can consider the following steps:

\begin{enumerate}
\item For each $i \in \{1,\ldots, r\}$ compute a factorization $\mathbf{a}_i$ of $\mathbf{u}_i$ in $S$.
\item Set the polynomial rings $R_1=K[Z_1,\ldots,Z_n]$, $R_2=K[Y_1,\ldots, Y_d]$, with $K$ a field, the map $\psi:R_1 \rightarrow R_2$ defined by $Z_i \mapsto \mathbf{Y}^{\mathbf{g}_i}$ and compute the ideal $J_S=\ker(\psi)$.
\item Set the ideal $P_S=J_S+(\mathbf{Z}^{\mathbf{a}_1},\ldots,\mathbf{Z}^{\mathbf{a}_r})$ and compute a Gr\"obner basis $G$ of $P_S$ with respect to a monomial order $\preceq$.
\item If $G$ does not satisfies condition (ii) of Theorem~\ref{thm:CLO}, then $S\setminus I$ is not finite and we can stop. Otherwise compute a basis $\mathcal{B}$ of the $K$-vector space $K[Z_1,\ldots,Z_n]/P_S$.
\item Compute $S\setminus I=\{\mathbf{h}\in \mathbb{N}^d \mid \mathbf{Y}^{\mathbf{h}}=\psi (\mathbf{Z}^{\mathbf{t}}), \mathbf{Z}^{\mathbf{t}}\in \mathcal{B}\}$.

\end{enumerate}

\end{Algorithm}

We point out that a similarity between Algorithm~\ref{alg:3} and Algorithm~\ref{alg:4} is actually hidden. That is, the exponent vectors of elements in the basis $\mathcal{B}$ of the $K$-vector space $K[Z_1,\ldots,Z_n]/P_S$ correspond to the vectors in $\mathbb{N}^n \setminus f_S^{-1}(I)$. In particular, these elements are obtained from a \emph{presentation} of $S$ (see \cite{rosales2001irreducible} for more details) in the first algorithm, and from the \emph{defining ideal} $I_S$ of $K[S]$ in the second algorithm. Considering the computation time, the relevant difference concerns with the time spent to compute the set $\operatorname{M}(I)$ against the computation of a Gr\"obner basis of the ideal $P_S$.


\subsection{A remark on Ap\'ery sets}\label{sec3}

Let $S\subseteq \mathbb{N}^d$ be an affine semigroup and $X\subseteq S$. The Ap\'ery set of $S$ with respect to $X$ is defined as $\operatorname{Ap}(S,X)=\{\mathbf{s}\in S\mid \mathbf{s}-\mathbf{x} \notin S\text{ for all }\mathbf{x}\in X\}$. This set is an important tool in the context of affine semigroups. For instance, in the case $S$ is \emph{simplicial}, it can be used to verify the Cohen-Macaulay and Gorenstein conditions for the associated semigroup ring (see \cite{rosales1998cohen}) and to compute the conductor (see \cite{jafari2022type}). Observe that $\operatorname{Ap}(S,X)=S\setminus (X+S)$, so it is the complement in $S$ of the ideal $X+S$. In particular we can state the following:

\begin{corollary}
Let $S\subseteq \mathbb{N}^d$ be an affine semigroup minimally generated by the set $\{\mathbf{g}_1,\ldots,\mathbf{g}_n\}$. If $X\subset S$, then $\operatorname{Ap}(S,X)$ is finite if and only if for all $i\in \{1,\ldots,n\}$ there exists $k_i\in \mathbb{N}\setminus\{0\}$ such that $k_i\mathbf{g}_i\in X+S$.
\end{corollary}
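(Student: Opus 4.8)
The plan is to recognize this corollary as an immediate translation of Theorem~\ref{thm:cofinite-ideal} under the dictionary relating Ap\'ery sets to ideal complements. The key observation, already recorded in the paragraph preceding the statement, is that $\operatorname{Ap}(S,X)=S\setminus(X+S)$, and that $I:=X+S$ is precisely the ideal of $S$ generated by $X$. With this identification in hand, asking whether $\operatorname{Ap}(S,X)$ is finite is literally the same question as asking whether $S\setminus I$ is finite, so the corollary should follow by applying Theorem~\ref{thm:cofinite-ideal} to $I=X+S$.

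First I would verify that $I=X+S$ genuinely is an ideal of $S$, so that Theorem~\ref{thm:cofinite-ideal} applies: for $\mathbf{s}\in S$ we have $I+\mathbf{s}=X+S+\mathbf{s}\subseteq X+S=I$ since $S+\mathbf{s}\subseteq S$, and clearly $I\subseteq S$ because $X\subseteq S$ and $S$ is closed under addition; this is just the standard fact, recalled at the start of Section~\ref{sec2}, that $X+S$ is an ideal whenever $X\subseteq S$. Next I would substitute this ideal into Theorem~\ref{thm:cofinite-ideal}, which (since $S$ is minimally generated by $\{\mathbf{g}_1,\ldots,\mathbf{g}_n\}$) states that $S\setminus I$ is finite if and only if for every $i\in\{1,\ldots,n\}$ there is some $k_i\in\mathbb{N}\setminus\{0\}$ with $k_i\mathbf{g}_i\in I$. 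Finally, I would rewrite $S\setminus I$ as $\operatorname{Ap}(S,X)$ and $I$ as $X+S$ in that biconditional, which yields exactly the claimed statement.

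I do not anticipate any genuine obstacle here, since the corollary is a direct specialization of the preceding theorem. The only point requiring a word of care is that Theorem~\ref{thm:cofinite-ideal} is phrased for an arbitrary generating set $\{\mathbf{g}_1,\ldots,\mathbf{g}_n\}$ of $S$, whereas the corollary explicitly takes the \emph{minimal} generating set; this is harmless, because the minimal generating set is in particular a generating set, so the hypothesis of the theorem is satisfied. (One should note, as the proof of Theorem~\ref{thm:cofinite-ideal} does implicitly, that the condition is most naturally checked on the minimal generators, since these are the obstructions to cofiniteness.) Thus the entire argument is a one-line invocation: apply Theorem~\ref{thm:cofinite-ideal} to the ideal $X+S$ and read off the equivalence using $\operatorname{Ap}(S,X)=S\setminus(X+S)$.
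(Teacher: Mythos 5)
Your proposal is correct and coincides with the paper's own treatment: the paper states this corollary as an immediate consequence of Theorem~\ref{thm:cofinite-ideal} applied to the ideal $I=X+S$, using precisely the observation $\operatorname{Ap}(S,X)=S\setminus(X+S)$ recorded just before the statement. Your additional checks (that $X+S$ is indeed an ideal, and that using the minimal generating set is a harmless specialization of the theorem's hypothesis) are exactly the details the paper leaves implicit.
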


As a consequence, if $S=\langle \mathbf{g}_{1},\mathbf{g}_{2},\ldots,\mathbf{g}_{r},\mathbf{g}_{r+1},\mathbf{g}_{r+2},\ldots,\mathbf{g}_{r+m} \rangle$, where $E=\{\mathbf{g}_{1},\mathbf{g}_{2},\ldots,\mathbf{g}_{r}\}$ is a set of extreme rays of $S$, then $\mathbf{g}_j=\sum_{i=1}^r q_i \mathbf{g}_i$, $q_i\in \mathbb{Q}_+$, for all $j\in \{1,\ldots,m\}$. It easily follows that there exists $k_j\in\mathbb{N}$ such that $k_j\mathbf{g}_j \in \langle \mathbf{g}_{1},\mathbf{g}_{2},\ldots,\mathbf{g}_{r} \rangle \subset S$. Therefore, in the case $E$ is a set of extreme rays of $S$, the set $\operatorname{Ap}(S,E)$ is finite. Moreover, referring to the previous arguments in Subsection~\ref{subsec: commAlg}, we can consider the ideal $P_S=J_S+(Z_1,\ldots, Z_r)$ and if $\mathcal{B}$ is a basis of the $K$-vector space $K[Z_1,\ldots,Z_n]/P_S$, then $\operatorname{Ap}(S,E)=\{\mathbf{h}\in \mathbb{N}^d \mid \mathbf{Y}^{\mathbf{h}}=\psi (\mathbf{Z}^{\mathbf{t}}), \mathbf{Z}^{\mathbf{t}}\in \mathcal{B}\}$, obtaining the same result contained in \cite[Theorem 3.3]{ojeda2017short}.

\section*{Acknowledgements}

Motivation for this paper was inspired by some discussions had with P. A. Garc{\'\i}a-S\'anchez during a period the author spent at the University of Granada. The author would like to express his gratitude to him, for the hospitality and for his very helpful comments and suggestions, that have allowed to improve this work. The author also acknowledges support from the Institute of Mathematics of the University of Granada (IMAG), through the program of Visits of Young Talented Researchers and from Istituto Nazionale di Alta Matematica (INDAM), through the program Concorso a n. 30 mensilità di borse di studio per l’estero per l’a.a. 2022-2023.

\end{document}